\newtheorem{thm}{Theorem}[section]
\newtheorem{lem}[thm]{Lemma}
\theoremstyle{definition}
\newtheorem{defn}{Definition}[section]
\newcommand{\scr}[1]{\mathscr #1}
\definecolor{wco}{rgb}{0.5,0.2,0.3}
\numberwithin{equation}{section} \theoremstyle{remark}
\newcommand{\ua}{\uparrow}
\title{{\bf Moderate Deviation Principles for Unbounded Additive Functionals of Distribution Dependent SDEs} }
\author{
{\bf     Panpan Ren $^{2,a)}$, Shen Wang $^{1,b)}$   }\\
\footnotesize{  1)Center for Applied Mathematics, Tianjin University, Tianjin 300072, China}\\
\footnotesize{  2)Mathematic Department, City University of Hong Kong, Hong Kong, China}\\
\footnotesize{ $^{a)}$rppzoe@gmail.com, $^{b)}$wswangshen@tju.edu.cn}}
\begin{document}
\allowdisplaybreaks
\def\R{\mathbb R}  \def\ff{\frac} \def\ss{\sqrt} \def\B{\mathbf
B}
\def\N{\mathbb N} \def\kk{\kappa} \def\m{{\bf m}}
\def\ee{\varepsilon}\def\ddd{D^*}
\def\dd{\delta} \def\DD{\Delta} \def\vv{\varepsilon} \def\rr{\rho}
\def\<{\langle} \def\>{\rangle} \def\GG{\Gamma} \def\gg{\gamma}
  \def\nn{\nabla} \def\pp{\partial} \def\E{\mathbb E}
\def\d{\text{\rm{d}}} \def\bb{\beta} \def\aa{\alpha} \def\D{\scr D}
  \def\si{\sigma} \def\ess{\text{\rm{ess}}}
\def\beg{\begin} \def\beq{\begin{equation}}  \def\F{\scr F}
\def\Ric{\text{\rm{Ric}}} \def\Hess{\text{\rm{Hess}}}
\def\e{\text{\rm{e}}} \def\ua{\underline a} \def\OO{\Omega}  \def\oo{\omega}
 \def\tt{\tilde} \def\Ric{\text{\rm{Ric}}}
\def\cut{\text{\rm{cut}}} \def\P{\mathbb P} \def\ifn{I_n(f^{\bigotimes n})}
\def\C{\scr C}   \def\G{\scr G}   \def\aaa{\mathbf{r}}     \def\r{r}
\def\gap{\text{\rm{gap}}} \def\prr{\pi_{{\bf m},\varrho}}  \def\r{\mathbf r}
\def\Z{\mathbb Z} \def\vrr{\varrho} \def\ll{\lambda}
\def\L{\scr L}\def\Tt{\tt} \def\TT{\tt}\def\II{\mathbb I}
\def\i{{\rm in}}\def\Sect{{\rm Sect}}  \def\H{\mathbb H}
\def\M{\scr M}\def\Q{\mathbb Q} \def\texto{\text{o}} \def\LL{\Lambda}
\def\Rank{{\rm Rank}} \def\B{\scr B} \def\i{{\rm i}} \def\HR{\hat{\R}^d}
\def\to{\rightarrow}\def\l{\ell}\def\iint{\int}
\def\EE{\scr E}\def\no{\nonumber}
\def\A{\scr A}\def\V{\mathbb V}\def\osc{{\rm osc}}
\def\BB{\scr B}\def\Ent{{\rm Ent}}\def\3{\triangle}\def\H{\scr H}
\def\U{\scr U}\def\8{\infty}\def\1{\lesssim}\def\HH{\mathrm{H}}
 \def\T{\scr T}
\maketitle

\begin{abstract}
By comparing the original equations with the corresponding stationary ones, the moderate deviation principle (MDP) is established for unbounded additive functionals of several different models of distribution dependent SDEs, with non-degenerate and degenerate noises.
\end{abstract} \noindent
 AMS subject Classification:\  60H10, 60H15.   \\
\noindent
 Keywords: Moderate deviation principle, exponential equivalence, distribution dependent SDEs.
 \vskip 2cm

\section{Introduction}

To characterise long time behaviours of stochastic systems, various limit theorems, including LLN(law of large numbers), CLT(central limit theorems), and LDP(large deviation principle) have been intensively investigated in the literature of Markov processes and random sequences, see for instance \cite{B88, BM78, BM80, BWY20, C91, CDR17, Gao17, IN64, WZ19, Wu95, Wu00}. On the other hand, less is known for limit theorems on nonlinear systems, where a typical model is the distribution dependent SDE (also called McKean-Vlasov or Mean-filed SDE), which arises from characterizations on nonlinear Fokker-Planck equations and mean-filed particle systems, see \cite{HRW} and references within.  Recently, the Donsker-Varadhan LDP for path-distribution dependent SDEs was investigated in \cite{RW20} for empirical measures of distribution dependent SDEs, which in particular implies LDP for bounded continuous additive functionals. In this paper, we investigate the MDP(moderate deviation principle) for unbounded additive functionals.

Below, we first briefly recall the notion of LDP and MDP, then introduce the model studied in the present paper.

Let $E$ be a polish space, and let $(X_t)_{t\geq 0}$ be a right continuous Markov process on $E$ with infinitesimal generator $L$. For a measurable space $(E,\B)$, let $\mathscr{P}(E)$ denote the set of all probability measures on $E$ with weak topology.
Consider the empirical measures of $(X_t)_{t\geq0}$:
\[
L_t:= \frac 1 t \int_0^t \delta_{X_s} \d s, \,\,\,t > 0.
\]
The following Donsker-Varadhan type long time LDP for $L_t$ has been studied in \cite{DV75}:
\begin{align}\label{PLt}
\mathbb{P}_x(L_t\in M)\approx \exp\{ -t \inf_{\nu\in M} J(\nu)\},\quad M\subset \mathscr{P}(E),
\end{align}
where $J(\nu)= \sup_{\inf_x U(x)> 0,U\in \mathcal{D}(L)} \int \frac{-LU}{U} \d \nu$, $\mathbb{P}_x$ denotes the probability of Markov process starting from $x$, and $\mathscr{P}(E)$ is the class of all probability measures on $E$, equipped with the weak topology.

In general, when the Markov process $(X_t)_{t\geq0}$ is ergodic, in order to describe the convergence of the empirical distribution $L_t$ to the unique invariant probability measure $\bar{\mu}$ as $t\rightarrow\infty$, a standard way is to look at the convergence rate of
\[
L_t^A:=\int_E A \d L_t= \frac 1 t \int_0^t A(X_s) \d s\rightarrow \bar{\mu}(A)\,\,\,\textrm{as}\,\,\,t\rightarrow\infty
\]
for $A$ in a class of reference functions. This leads to the study of the LDP (MDP) for the additive functional $L_t^A$. When $A$ is bounded and continuous, (\ref{PLt}) and the Contraction Principle imply the LDP of $L_t^A$, that is, for $M\in \B(\mathbb{B})$,
\[
\mathbb{P}_x( L_t^A \in M)\approx \exp\{ -t \inf_{z\in M} J^A(z)\},\quad A\subset \mathbb{R},
\]
where $J^A(z)= \inf\{J(\nu);\,\,\,\int A \d\nu=z \}$. But this approach does not apply when $A$ is unbounded. So, we consider the MDP (moderate deviation principle) for $L_t^A$ with unbounded $A$, which is equivalent to LDP for the modified additive functional
\[
l_t^A:=\frac {t} {a(t)}\Big( L_t^A-\bar{\mu}(A) \Big)=\frac 1 {a(t)} \int_0^t \big(A(X_s)-\bar{\mu}(A)\big) \d s,
\]
where $a(t)$ is a positive function satisfying
\begin{align}\label{at}
\lim_{t\rightarrow\infty}\frac{\sqrt{t}}{a(t)}= 0,\,\,\,\,\lim_{t\rightarrow\infty}\frac{a(t)}{t}= 0.
\end{align}

\begin{defn}
\begin{enumerate}
\item[\bf{(1)}] $L_t^A$ is said to satisfy the upper bound uniform MDP with a rate function $I$, denoted by $L_t^A \in MDP_u(I)$, if for any $a$ satisfying (\ref{at}),
   \[
   \limsup_{t\rightarrow\infty} \frac {t}{a^2(t)} \log \mathbb{P}(l_t^A\in F)\leq -\inf_F I,\quad F\subset \mathbb{R}\,\,\, \textrm{is}\,\,\, \textrm{closed}.
   \]
\item[\bf{(2)}] $L_t^A$ is said to satisfy the lower bound uniform MDP with a rate function $I$, denoted by $L_t^A \in MDP_l(I)$, if for any $a$ satisfying (\ref{at}),
   \[
   \liminf_{t\rightarrow\infty} \frac {t}{a^2(t)} \log \mathbb{P}(l_t^A\in G)\geq -\inf_G I,\quad G\subset \mathbb{R} \,\,\, \textrm{is}\,\,\, \textrm{open}.
   \]
\item[\bf{(3)}] $L_t^A$ is said to satisfy the uniform MDP with a rate function $I$, denoted by $L_t^A \in MDP(I)$, if $L_t^A \in MDP_u(I)$ and $L_t^A \in MDP_l(I)$.
\end{enumerate}
\end{defn}

The MDP has been established in \cite{Gao17} for non-degenerate SDEs by using Wang's Harnack inequality \cite{Wfy11}:
\[
\d X_t= b(X_t)\d t+ \sigma(X_t)\d B_t,\,\,\,X_0=x \in \mathbb{R}^d.
\]
The assumptions in \cite{Gao17} was further simplified and improved in \cite{WZ19}, so that degenerate situations are also included.

In this paper, we investigate MDP for unbounded additive functionals of the following distribution dependent SDE (DDSDE for short) on $\mathbb{R}^d$:
\beg{equation}\label{mDDSDE}
\beg{split}
&\d X_t= b(X_t,\mathscr{L}_{X_t})\d t+ \sigma(X_t,\mathscr{L}_{X_t})\d B_t,
\end{split}
\end{equation}
where $b:\mathbb{R}^d\times\mathscr{P}_2(\mathbb{R}^d)\rightarrow\mathbb{R}^d$,
$\sigma:\mathbb{R}^d\times\mathscr{P}_2(\mathbb{R}^d)\rightarrow \mathbb{R}^{d\times d}$,
$B_t$ is a $d$-dimensional Brownian motion, $\mathscr{L}_{X_t}$ is the law of $X_t$ under the reference probability space.

Let $\mathscr{P}_2$ be the space of all probability measures $\mu$ on $\mathbb{R}^d$ such that
\[
\|\mu\|_2:=\bigg( \int_{\mathbb{R}^d } |x|^2 \mu(\d x)  \bigg)^{\frac 1 2}< \infty.
\]
It is well known that $\mathscr{P}_2$ is a Polish space under the Wasserstein distance
\[
W_2(\mu,\nu):=\inf_{\pi \in \C(\mu,\nu)} \Bigg( \int_{\mathbb{R}^d \times \mathbb{R}^d} |x-y|^2 \pi(\d x,\d y) \Bigg)^{\frac 1 2},
\]
where $\C(\mu,\nu)$ is the set of all couplings for $\mu$ and $\nu$.

As in \cite{RW20}, to establish MDP for DDSDE (\ref{mDDSDE}), we choose a reference SDE whose solution is Markovian so that existing results on the MDP apply. By comparing the original equation with the reference one in the sense of  MDP, we establish the MDP for the DDSDE. We will state the main results in Section 2, and present complete proofs in Section 3.

\section{Main results}

We consider several different situations.

\subsection{Lipschitz Continuous $A$.}

We consider DDSDE (\ref{mDDSDE}) and make the following assumptions:

\begin{enumerate}
\item[\bf{(H1)}] $b$ is continuous, and $\sigma$ is Lipschitz continuous on $\mathbb{R}^d \times \mathscr{P}_2(\mathbb{R}^d)$ such that
\begin{align*}
&2\langle b(x,\mu)-b(y,\nu),x-y \rangle+ \| \sigma(x,\mu)-\sigma(y,\nu)\|_{HS}^2 \\
&\leq \lambda_2 W_2(\mu,\nu)^2- \lambda_1 |x-y|^2,\,\,\quad \,\,x,y\in\mathbb{R}^d;\,\,\,\mu,\nu\in \mathscr{P}_2(\mathbb{R}^d)
\end{align*}
holds for some constants $\lambda_1 > \lambda_2 \geq 0$.
\item[\bf{(H2)}] There exist constants $0<\kappa_1\leq \kappa_2<\infty$ such that
   \[
   \kappa_1^2 I \leq \sigma(x,\mu)\sigma(x,\mu)^{*}\leq \kappa_2^2 I,\quad x\in \mathbb{R}^d,~\mu\in \mathscr{P}_2(\mathbb{R}^d),
   \]
    where $\sigma^{*}$ denotes the transpose of the matrix $\sigma$, $I$ denotes the identity matrix.
\end{enumerate}

According to \cite[Theorem 2.1]{Wang18}, assumption (H1) implies that for any $X_0\in L^2(\Omega\rightarrow\mathbb{R}^d,\mathcal{F}_0,\mathbb{P})$, the equation (\ref{mDDSDE}) has a unique solution. We write $P_t^{*}\nu=\mathscr{L}_{X_t}$ if $\mathscr{L}_{X_0}=\nu$. By \cite[ Theorem 3.1(2)]{Wang18}, $P_t^{*}$ has a unique invariant probability measure $\bar{\mu}\in \mathscr{P}_2(\mathbb{R}^d)$
such that
\beg{equation}\label{W2}
W_2(P_t^{*}\nu,\bar{\mu})^2\leq W_2(\nu,\bar{\mu})^2 e^{-(\lambda_1-\lambda_2)t},\,\,\,~t\geq 0,~\,\nu\in \mathscr{P}_2(\mathbb{R}^d).
\end{equation}

Consider the stationary reference SDE:
\beg{equation}\label{refmSDE}
\beg{split}
&\d \bar{X}_t= b(\bar{X}_t,\bar{\mu})\d t+ \sigma(\bar{X}_t,\bar{\mu})\d B_t,\quad \mathscr{L}_{X_0}=\bar{\mu}.
\end{split}
\end{equation}
Under (H1), the equation (\ref{refmSDE}) has a unique solution $\bar{X}^x_t$ for any starting point $x \in \mathbb{R}^d$, and $\bar{\mu}$ is the unique invariant probability measure of the associated Markov semigroup
\[
\bar{P}_t f(x):=\mathbb{E}[f(\bar{X}^x_t)],\,\,\,t\geq 0,\,\,x \in \mathbb{R}^d,\,\,\,f\in \B_b(\mathbb{R}^d),
\]
where $\bar{P}_t$ is generated by
\[
\bar{\mathscr{A}}:=\frac 1 2\sum_{i,j=1}^d \{\sigma\sigma^{*}\}_{ij}(x,\bar{\mu})\partial_i\partial_j+\sum_{i=1}^d b_i(x,\bar{\mu})\partial_i.
\]

According to \cite{Gao17} and \cite{WZ19}, under assumptions (H1) and (H2),  $\bar{P}_t$ is $\bar{\mu}$-hypercontractive and strong Feller, i.e., $\|\bar{P}_t\|_{L^2(\bar{\mu})\rightarrow L^4(\bar{\mu})}=1$ for large $t> 0$ and $\bar{P}_t \B_b(\mathbb{R}^d)\subset C_b(\mathbb{R}^d)$ for $t> 0$. In particular, the hypercontractivity implies that there exists $\lambda> 0$ such that
\[
\bar{\mu}(|\bar{P}_t f-\bar{\mu}(f)|^2)\leq e^{-\lambda t} \bar{\mu}(|f-\bar{\mu}(f)|^2),\quad t\geq0,\quad f\in L^2(\bar{\mu}),
\]
so, for any $f\in L^2(\bar{\mu})$,
\beg{equation}\label{VarA}
\bar{V}(f):=\int_0^{\infty} \bar{\mu}(|\bar{P}_t f-\bar{\mu}(f)|^2) \d t <\infty.
\end{equation}

We have the following result:
\begin{thm}\label{mresult}
Assume (H1) and (H2). If $\mathbb{E}[e^{\delta |X_0|^2}]<\infty$ for some constant $\delta>0$, then for any Lipschitz continuous function $A$ on $\mathbb{R}^d$, $L_t^A\in \textrm{MDP}(I)$ for $I(y)={y^2}/({8 \bar{V}(A)})$, $y\in \mathbb{R}$.
\end{thm}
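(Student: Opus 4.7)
The plan is to reduce the MDP for $L_t^A$ to that of the stationary reference equation (\ref{refmSDE}) by showing that $l_t^A$ is exponentially equivalent to
\[
\bar l_t^A := \frac{1}{a(t)}\int_0^t\big(A(\bar X_s) - \bar\mu(A)\big)\,\d s
\]
at the MDP scale $t/a^2(t)$. Under (H1)+(H2) the semigroup $\bar P_t$ is $\bar\mu$-hypercontractive and strong Feller, so the results of Gao \cite{Gao17} and Wang--Zhang \cite{WZ19} applied to the Markov process $\bar X$ yield $\bar l_t^A\in\mathrm{MDP}(I)$ with $I(y)=y^2/(8\bar V(A))$, finiteness of $\bar V(A)$ being guaranteed by (\ref{VarA}). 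Since MDP is stable under exponential equivalence, the theorem reduces to showing that for every $\delta>0$,
\[
\limsup_{t\to\infty}\frac{t}{a^2(t)}\log\P\big(|l_t^A-\bar l_t^A|>\delta\big)=-\infty.
\]

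To obtain this, I would couple $X$ and $\bar X$ synchronously, driven by the same Brownian motion with $\bar X_0=X_0$. It\^o's formula together with (H1) gives
\[
\d|X_t-\bar X_t|^2\leq\big(\lambda_2 W_2(\mathscr{L}_{X_t},\bar\mu)^2-\lambda_1|X_t-\bar X_t|^2\big)\,\d t+\d M_t,
\]
with $M$ a local martingale whose quadratic variation is controlled via (H2). Combining this with the Wasserstein contraction (\ref{W2}) and Gronwall yields exponential decay $\E|X_t-\bar X_t|^2\leq C\e^{-\kappa t}$ for some $\kappa>0$. Since $A$ is Lipschitz with constant $L$,
\[
|l_t^A-\bar l_t^A|\leq\frac{L}{a(t)}\int_0^t|X_s-\bar X_s|\,\d s,
\]
so the problem reduces to sharp upper-tail estimates for $\int_0^t|X_s-\bar X_s|\,\d s$.

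The crucial subtlety is that polynomial Markov-type bounds are insufficient: by (\ref{at}) one has $t/a(t)\to\infty$, but $t\log a(t)/a^2(t)$ need not diverge (e.g.\ for $a(t)=t^{3/4}$ it vanishes), so polynomial decay of any fixed order in $1/a(t)$ cannot yield the desired $-\infty$ limit. The main technical step, and the principal obstacle, is therefore to establish an exponential moment bound
\[
\sup_{t\geq 0}\E\exp\Big(\eta\int_0^t|X_s-\bar X_s|\,\d s\Big)<\infty
\]
for some $\eta>0$. This is where the hypothesis $\E\e^{\delta|X_0|^2}<\infty$ becomes decisive: together with the dissipativity from (H1) it propagates to $\sup_{t\geq 0}\E\e^{\delta'|X_t|^2}<\infty$, and combined with Novikov-type control of the exponential martingale generated by $M$, available thanks to the boundedness of $\sigma\sigma^*$ in (H2), it closes an exponential Gronwall inequality in which one must simultaneously absorb the martingale contribution and dominate the source term $\lambda_2 W_2(\mathscr{L}_{X_t},\bar\mu)^2$ via (\ref{W2}) at the exponential level. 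Exponential Chebyshev then gives $\P(|l_t^A-\bar l_t^A|>\delta)\leq C\e^{-c\,a(t)}$, whence $\frac{t}{a^2(t)}\log\P\leq -c\,t/a(t)\to-\infty$ by (\ref{at}), and exponential equivalence transfers the MDP from $\bar l_t^A$ to $l_t^A$.
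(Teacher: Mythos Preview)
Your proposal is correct and follows essentially the same route as the paper: the paper likewise reduces everything to the exponential moment $\E\exp\big(\delta\int_0^\infty|X_s-\bar X_s|\,\d s\big)<\infty$ (isolated there as Theorem~\ref{ThEeN}), derives the same It\^o inequality from (H1), and closes it via a H\"older/bootstrap argument applied to $\xi_t=(e^{-\lambda t}+|X_t-\bar X_t|^2)^{1/2}$. The one noteworthy variation is that the paper initializes $\bar X_0$ with law $\bar\mu$ rather than taking $\bar X_0=X_0$; consequently the hypothesis $\E e^{\delta|X_0|^2}<\infty$ is spent there on controlling $\E e^{c\,\xi_0}$, whereas with your synchronous coupling that initial term is trivial and the exponential moment assumption would instead be absorbed into verifying the MDP for $\bar X$ from a non-stationary start.
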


\subsection{H\"{o}lder continuous $A$.}

When $A$ is H\"{o}lder continuous, we need to assume that $\sigma(x,\mu)=\sigma(\mu)$ does not depend on $x$. In this case, the DDSDE becomes
\beg{equation}\label{maDDSDE}
\beg{split}
&\d X_t= b(X_t,\mathscr{L}_{X_t})\d t+ \sigma(\mathscr{L}_{X_t})\d B_t,
\end{split}
\end{equation}
and the reference SDE reduces to
\beg{equation}\label{refmaSDE}
\beg{split}
&\d \bar{X}_t= b(\bar{X}_t,\bar{\mu})\d t+ \sigma(\bar{\mu})\d B_t.
\end{split}
\end{equation}

Below we give the main result of this subsection.

\begin{thm}\label{mresult2}
Assume (H1),(H2), and let $\sigma(x,\mu)=\sigma(\mu)$ do not depend on $x$. If there exists a constant $\delta>0$ such that $\mathbb{E}[e^{\delta |X_0|^2}]<\infty$, then for any function $A$ such that
\[
\sup_{x\neq y}{\frac{|A(x)-A(y)|}{ |x-y|^{\alpha}\big( 1+|x|+|y| \big)^{2-\alpha}}}< \infty,\quad x,y\in \mathbb{R}^d
\]
holds for some $\alpha\in(0,1)$, $L_t^A\in \textrm{MDP}(I)$ for $I(y)={y^2}/({8 \bar{V}(A)})$, $y\in \mathbb{R}$.
\end{thm}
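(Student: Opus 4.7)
My plan is to prove Theorem~\ref{mresult2} by the same compare-with-the-stationary-equation strategy used for Theorem~\ref{mresult}, upgraded to accommodate the H\"older growth condition on $A$. I construct the reference process $\bar X_t$ solving \eqref{refmaSDE} with $\L_{\bar X_0}=\bar\mu$ and driven by the \emph{same} Brownian motion $B_t$ as $X_t$ (synchronous coupling). Since \eqref{refmaSDE} is a classical non-degenerate time-homogeneous diffusion, the MDP for $\bar L_t^A:=\ff1t\int_0^t A(\bar X_s)\,\d s$ is available from \cite{Gao17, WZ19} under (H2), with rate function $I(y)=y^2/(8\bar V(A))$. The goal is then to show that $l_t^A$ is exponentially equivalent, at the moderate-deviation speed $t/a(t)^2$, to the analogous object $\bar l_t^A$ built from $\bar X_t$; since their difference is $\ff{1}{a(t)}\int_0^t[A(X_s)-A(\bar X_s)]\,\d s$, the whole argument reduces to a single superexponential tail estimate on that quantity.

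As preliminary ingredients, applying It\^o's formula to $e^{\dd|x|^2}$ for small $\dd>0$ (as in \cite{RW20, Wang18}) and using (H1), (H2), and $\E[e^{\dd|X_0|^2}]<\infty$, I would first obtain uniform-in-time Gaussian-type moments $\sup_{t\ge 0}\E[e^{\dd'|X_t|^2}]+\sup_{t\ge 0}\E[e^{\dd'|\bar X_t|^2}]<\infty$ for some $\dd'>0$. Combined with \eqref{W2}, applying It\^o's formula to $|X_t-\bar X_t|^2$ under the synchronous coupling --- using that $\sigma(\L_{X_t})-\sigma(\bar\mu)$ is nonrandom and hence the quadratic-variation contribution is deterministic and dominated by $CW_2(\L_{X_t},\bar\mu)^2$ --- yields exponential decay $\E|X_t-\bar X_t|^2\le Ce^{-\kk t}$ for some $\kk>0$. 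The core step is then the superexponential bound
\begin{equation*}
\limsup_{t\to\infty}\frac{t}{a(t)^2}\log\P\left(\frac{1}{a(t)}\left|\int_0^t[A(X_s)-A(\bar X_s)]\,\d s\right|>\eta\right)=-\infty,\qquad \eta>0.
\end{equation*}
By the H\"older growth hypothesis, the integrand is bounded pointwise by $C|X_s-\bar X_s|^{\aa}(1+|X_s|+|\bar X_s|)^{2-\aa}$. I would apply a Chebyshev inequality at scale $\theta a(t)$: the H\"older-small factor $|X_s-\bar X_s|^{\aa}$ is controlled via the $L^2$-contraction above (raised to the power $\aa/2$), while the polynomial weight $(1+|X_s|+|\bar X_s|)^{2-\aa}$ is absorbed using the uniform $e^{\dd'|x|^2}$-moment bound; optimizing in $\theta$ produces the required $\exp(-c\,a(t)^2/t)$ decay.

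Once exponential equivalence is in place, Theorem~\ref{mresult2} follows directly from the MDP for $\bar L_t^A$ (which holds by \cite{Gao17, WZ19} since $\bar P_t$ is $\bar\mu$-hypercontractive and strong Feller under (H1)--(H2), with asymptotic variance $\bar V(A)$) via the standard exponential-equivalence principle. The main obstacle is the superexponential step: although $|X_s-\bar X_s|^{\aa}$ decays nicely, the weight $(1+|X_s|+|\bar X_s|)^{2-\aa}$ is \emph{not} exponentially integrable in its own right, and one must trade it against the $e^{\dd'|x|^2}$ moments in precisely the right proportion to beat the moderate-deviation speed $a(t)^2/t\to\infty$. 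This is where the assumption $\sigma(x,\mu)=\sigma(\mu)$ becomes essential: removing the state-dependence from the noise permits direct It\^o control of $|X_t-\bar X_t|^2$ without resorting to a Girsanov transformation, which would otherwise disrupt the delicate exponential-moment bookkeeping underlying the superexponential estimate.
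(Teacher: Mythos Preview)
Your overall architecture matches the paper's: couple $X_t$ with the stationary solution $\bar X_t$ of \eqref{refmaSDE} via the same Brownian motion, import the MDP for $\bar L_t^A$ from \cite{Gao17,WZ19}, and upgrade to $L_t^A$ by exponential equivalence (Lemma~\ref{Exappro}). The paper, however, does \emph{not} attack the superexponential estimate via Chebyshev at scale $\theta a(t)$ combined with fixed-time moments. Instead it proves the single exponential-integral bound (Theorem~\ref{ThEeXY}, condition \eqref{EeN12})
\[
\E\Big[\exp\Big\{\dd\int_0^\infty |X_s-\bar X_s|^{\aa}(1+|X_s|+|\bar X_s|)^{2-\aa}\,\d s\Big\}\Big]<\infty
\]
for some $\dd>0$, and then deduces exponential equivalence by the same Chebyshev-at-scale-$\sqrt{a(t)/t}$ argument as in Theorem~\ref{ThEeN}. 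The proof of \eqref{EeN12} runs through a Young-inequality splitting with \emph{time-dependent} exponential weights,
\[
|X_t-\bar X_t|^{\aa}(1+|X_t|+|\bar X_t|)^{2-\aa}\le \tfrac{\aa}{2}\,e^{2\ll t/\aa}|X_t-\bar X_t|^2+\tfrac{3(2-\aa)}{2}\,e^{-2\ll t/(2-\aa)}(1+|X_t|^2+|\bar X_t|^2),
\]
for a suitable $\ll<\aa(\ll_1-\ll_2)/2$, and then bounds the exponential moments of the two resulting time-integrals \emph{separately} by applying It\^o's formula to $e^{2\ll t/\aa}|X_t-\bar X_t|^2$ and to $e^{-2\ll t/(2-\aa)}|X_t|^2$ (resp.\ $|\bar X_t|^2$), each time closing a self-improving inequality in which the sought exponential moment reappears on the right with a strictly smaller exponent.

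Your proposal has a gap precisely at this point. The two fixed-time estimates you plan to use, $\E|X_t-\bar X_t|^2\le Ce^{-\kk t}$ and $\sup_t\E[e^{\dd'|X_t|^2}]<\infty$, control $\E[|X_t-\bar X_t|^{\aa}]$ and $\E[(1+|X_t|+|\bar X_t|)^{2-\aa}]$ at each fixed $t$, but they do \emph{not} by themselves yield an exponential moment of the time-integral $\int_0^\infty(\cdots)\,\d s$, which is what any exponential Chebyshev argument at a scale growing with $t$ actually requires. For instance, after the Young splitting the piece $\int_0^\infty e^{2\ll s/\aa}|X_s-\bar X_s|^2\,\d s$ has finite expectation thanks to $\E|X_s-\bar X_s|^2\le Ce^{-\kk s}$, but finite expectation is far from the exponential integrability needed; the paper obtains the latter only by working at the level of stochastic differentials and exploiting that the martingale part of $\d|X_t-\bar X_t|^2$ has quadratic variation $\le C|X_t-\bar X_t|^2 W_2(P_t^*\nu,\bar\mu)^2\,\d t$, which feeds back into the same integral. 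In short: your ingredients are correct and your framework is the right one, but ``$L^2$-contraction raised to the $\aa/2$ power plus uniform Gaussian moments, then optimize $\theta$'' is not a mechanism that produces a superexponential bound. You need the weighted Young splitting and the It\^o-based self-improving loop---or something of equivalent strength---to close the argument.
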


\subsection{Non-H\"{o}lder continuous $A$.}
In this part, we consider non-H\"{o}lder continuous $A$ for which we need to further strengthen the assumption that $\sigma$ is constant matrix. So, the DDSDE and the reference SDE reduce to
\beg{equation}\label{macDDSDE}
\beg{split}
&\d X_t= b(X_t,\mathscr{L}_{X_t})\d t+ \sigma \d B_t,
\end{split}
\end{equation}
and
\beg{equation}\label{refmacSDE}
\beg{split}
&\d \bar{X}_t= b(\bar{X}_t,\bar{\mu})\d t+ \sigma \d B_t.
\end{split}
\end{equation}

\begin{thm}\label{mresult3}
Assume (H1),(H2) and let $\sigma$ be constant. If $\mathbb{E}[e^{\delta |X_0|^2}]<\infty$ for some $\delta>0$, then for any function $A$ such that
\[
\sup_{x\neq y}{\frac{|A(x)-A(y)|\cdot{\log(e+|x|^2+|y|^2)\cdot[\log(e+|x-y|^{-1})]^{p}
}}{ \big( 1+|x|^2+|y|^2 \big)}}< \infty,\quad x,y\in \mathbb{R}^d
\]
holds for some $p>1$, $L_t^A\in \textrm{MDP}(I)$ for $I(y)={y^2}/({8 \bar{V}(A)})$, $y\in \mathbb{R}$.
\end{thm}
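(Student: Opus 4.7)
The strategy parallels that of Theorems~\ref{mresult} and \ref{mresult2}: first establish the MDP with rate $I(y)=y^2/(8\bar V(A))$ for the additive functional $\bar L_t^A:=\ff 1 t\int_0^t A(\bar X_s)\,\d s$ of the (non-distribution-dependent, Markovian) reference SDE \eqref{refmacSDE}, and then transfer it to $L_t^A$ by showing that the two are exponentially equivalent at the moderate-deviation scale $a(t)^2/t$. Under (H1)--(H2) the reference semigroup $\bar P_t$ is $\bar\mu$-hypercontractive and strong Feller, so the MDP for $\bar L_t^A$ in the non-H\"older class specified here follows from the abstract framework of \cite{Gao17,WZ19} (possibly combined with a H\"older-approximation argument to handle the logarithmic modulus), with the condition $p>1$ ensuring $\bar V(A)<\8$ in \eqref{VarA}.

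The core task is then to prove, for every $\dd>0$,
\[
\limsup_{t\to\8}\ff{t}{a(t)^2}\log\P\bigl(\DD_t>\dd\, a(t)\bigr)=-\8,\qquad
\DD_t:=\int_0^t|A(X_s)-A(\bar X_s)|\,\d s,
\]
where $X$ and $\bar X$ solve \eqref{macDDSDE} and \eqref{refmacSDE} respectively, driven by the same Brownian motion with the common initial condition $\bar X_0=X_0$. Since $\sigma$ is constant, the synchronous coupling cancels the noise in $X_t-\bar X_t$, so (H1) together with the $W_2$-contraction \eqref{W2} yields by Gr\"onwall the \emph{deterministic} bound
\[
|X_t-\bar X_t|^2\le Ke^{-(\ll_1-\ll_2)t},\qquad t\ge 0,
\]
with $K$ depending only on the initial distribution of $X_0$. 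Hence $\log(e+|X_s-\bar X_s|^{-1})\ge c(1+s)$ for some $c>0$, and substituting into the modulus hypothesis on $A$ (while using $\log(e+|X_s|^2+|\bar X_s|^2)\ge 1$) produces the pathwise bound
\[
|A(X_s)-A(\bar X_s)|\le\ff{C(1+|X_s|^2+|\bar X_s|^2)}{(1+s)^p},\qquad s\ge 0.
\]

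Because $p>1$, the weight $(1+s)^{-p}$ is integrable on $[0,\8)$, so Jensen's inequality applied to $x\mapsto e^{Mx}$ against the renormalised probability measure $(p-1)(1+s)^{-p}\,\d s$ gives, for every $M>0$,
\[
\E\,e^{M\DD_t}\le\sup_{s\ge 0}\E\bigl[e^{M'(1+|X_s|^2+|\bar X_s|^2)}\bigr],\qquad M'=\ff{MC}{p-1}.
\]
A standard It\^o/Gr\"onwall computation on $e^{\eta|X_t|^2}$, combining the dissipativity in (H1), the $W_2$-contraction \eqref{W2}, and the hypothesis $\E[e^{\dd|X_0|^2}]<\8$, propagates exponential moments uniformly in time, yielding $\sup_{s\ge 0}\E[e^{\eta(|X_s|^2+|\bar X_s|^2)}]<\8$ for some $\eta>0$. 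Choosing $M$ with $M'\le\eta$ thus bounds $\E\,e^{M\DD_t}\le C_M$ uniformly in $t$, and Chebyshev's inequality gives $\log\P(\DD_t>\dd a(t))\le -M\dd\,a(t)+\log C_M$; dividing by $a(t)^2/t$ and invoking the consequences $t/a(t)\to\8$ and $t/a(t)^2\to 0$ of \eqref{at} drives the limsup to $-\8$, completing the exponential equivalence.

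The two main technical points are (i)~the MDP for $\bar L_t^A$ in this non-H\"older class, which may not be directly covered by the standard statement of \cite{Gao17,WZ19} and would then require approximating $A$ by H\"older functions $A_n$ and controlling the remainders $A-A_n$ by a Jensen-type exponential estimate analogous to the one above, and (ii)~the uniform-in-$t$ exponential moment propagation for the DDSDE \eqref{macDDSDE}, which is classical under the dissipativity in (H1) once it is combined with the $W_2$-decay \eqref{W2}. Neither should be a serious obstruction; the logarithmic modulus with $p>1$ is tailor-made to make the polynomial-in-$t$ decay $(1+s)^{-p}$ integrable, and this is precisely what powers the exponential-equivalence step.
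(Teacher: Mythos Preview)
Your strategy matches the paper's: establish the MDP for the reference additive functional $\bar L_t^A$ via \cite{WZ19}, exploit the constancy of $\sigma$ so that the synchronous coupling makes $|X_t-\bar X_t|$ decay \emph{deterministically} like $e^{-(\ll_1-\ll_2)t/2}$, and convert this into exponential equivalence. The differences are in the bookkeeping. The paper couples with $\L_{\bar X_0}=\bar\mu$ (stationary start) rather than $\bar X_0=X_0$, and then bounds the whole integral pathwise by
\[
\int_0^\infty |A(X_s)-A(\bar X_s)|\,\d s\le C_0\Big(e+\sup_{t\ge 0}(|X_t|^2+|\bar X_t|^2)+W_2(\nu,\bar\mu)\Big),
\]
retaining the factor $\log(e+|x|^2+|y|^2)$ from the hypothesis to control the cross-term $\frac{e+\aa}{\log(e+\aa)}\log(e+\bb)$ by a case analysis $\aa\gtrless\bb$. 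This forces them to prove the maximal estimate $\E\big[\sup_{t\ge 0}e^{\dd|X_t|^2}\big]<\infty$, which they obtain through a BDG/Jensen/Gronwall argument. Your route is more economical: you discard the $\log(e+|x|^2+|y|^2)$ factor (legitimately, since it is $\ge 1$) and instead apply Jensen's inequality against the probability weight $(p-1)(1+s)^{-p}\,\d s$, so you only need the fixed-time bound $\sup_{s\ge 0}\E[e^{\eta|X_s|^2}]<\infty$, which follows from (H1) by a standard It\^o computation without any maximal inequality. Both approaches are correct; yours avoids the $\E[\sup_t\cdots]$ estimate at the cost of not exploiting the extra logarithmic slack in the hypothesis on $A$.
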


\subsection{The degenerate case}

In this section, we consider the distribution dependent stochastic Hamiltonian system for $X_t=(X_t^{(1)}, X_t^{(2)})$ on $\mathbb{R}^{m+d}$:
\begin{align}\label{SHS}
\left\{
\begin{aligned}
\d X_t^{(1)} & =  (A X_t^{(1)}+B X_t^{(2)}) \d t, \\
\d X_t^{(2)} & =  Z( X_t, \mathscr{L}_{X_t}) \d t+M \d B_t,
\end{aligned}
\right.
\end{align}
where $A,B$ and $M$ are $m\times m$, $m\times d$ and $d\times d$ matrixes respectively, $B_t$ is $d$ dimensional Brownian motion. Define
\[
W_2(\nu_1,\nu_2):=\inf_{\pi \in \C(\nu_1,\nu_2)} \Bigg( \int_{\mathbb{R}^{m+d} \times \mathbb{R}^{m+d}} \big(|\xi_1^{(1)}-\xi_2^{(1)}|^2+|\xi_1^{(2)}-\xi_2^{(2)}|^2\big) \pi\big(\d\xi_1,\d\xi_2\big) \Bigg)^{\frac 1 2}.
\]

We assume
\begin{enumerate}
\item[\bf{(D1)}] $M$ is invertible and $Rank[B,AB,\ldots,A^{m-1}B]=m$.

\item[\bf{(D2)}]  $Z: \mathbb{R}^{m+d} \times\mathscr{P}_2(\mathbb{R}^{m+d})\rightarrow \mathbb{R}^d$ is Lipschitz continuous.

\item[\bf{(D3)}] There exist constants $r>0$, $\theta_1>\theta_2>0$ and $r_0\in(-\|B\|^{-1},\|B\|^{-1})$ such that
\begin{align*}
&\langle r^2(x^{(1)}-y^{(1)})+r r_0 B(x^{(2)}-y^{(2)}), A(x^{(1)}-y^{(1)})+B(x^{(2)}-y^{(2)}) \rangle\\
&+\langle Z(x,\mu)-Z(y,\nu), x^{(2)}-y^{(2)}+ r r_0 B^{*}(x^{(1)}-y^{(1)}) \rangle\\
&\leq -\theta_1(|x^{(1)}-y^{(1)}|^2+|x^{(2)}-y^{(2)}|^2)+\theta_2 W_2(\mu,\nu)^2,\\
&\,\quad\,x=(x^{(1)},x^{(2)}),~y=(y^{(1)},y^{(2)})\in \mathbb{R}^{m+d},~\mu,\nu\in \mathscr{P}_2(\mathbb{R}^{m+d}).
\end{align*}
\end{enumerate}

\begin{thm}\label{mreSHS}
Assume (D1)-(D3), and let $A$ be a Lipschitz continuous function on $\mathbb{R}^{m+d}$. Then
\begin{enumerate}
\item[\bf{(1)}]
 For any $\mu_0,\nu_0\in \mathscr{P}_2(\mathbb{R}^{m+d})$, there exists a constant $C$ such that
 \[
 W_2(P_t^{*}\mu_0,P_t^{*}\nu_0)^2\leq C e^{-\frac{\theta_1-\theta_2}{2 C} t} W_2(\mu_0,\nu_0)^2,\quad\quad t\geq 0.
 \]
\item[\bf{(2)}]
$P_t^{*}$ has an invariant probability measure $\bar{\mu}\in \mathscr{P}_2(\mathbb{R}^{m+d})$ such that
\begin{align}\label{W2SHS}
W_2(P_t^{*}\mu_0,\bar{\mu})^2\leq C e^{-\frac{\theta_1-\theta_2}{2 C} t} W_2(\mu_0,\bar{\mu})^2 ,\quad\quad t\geq 0,~\mu_0\in \mathscr{P}_2(\mathbb{R}^{m+d}).
\end{align}
\item[\bf{(3)}]
If there exists a constant $\delta>0$ such that $\mathbb{E}[e^{\delta |X_0|^2}]<\infty$, then $L_t^A\in \textrm{MDP}(I)$ for $I(y)={y^2}/({8 \bar{V}(A)})$, $y\in \mathbb{R}^{m+d}$.

\end{enumerate}
\end{thm}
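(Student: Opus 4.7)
The overall plan is to handle (1) and (2) by a synchronous coupling using the weighted quadratic form that is already implicit in (D3), and to deduce (3) from the MDP for the Markovian reference system
\[
\d \bar X_t^{(1)} = (A\bar X_t^{(1)} + B\bar X_t^{(2)})\d t, \qquad \d \bar X_t^{(2)} = Z(\bar X_t, \bar\mu)\d t + M\d B_t
\]
by establishing an exponential equivalence between $L_t^A$ and $\bar L_t^A:=\frac{1}{t}\int_0^t A(\bar X_s)\d s$, in the same spirit as in Theorems \ref{mresult}, \ref{mresult2} and \ref{mresult3}.

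For (1), I would couple two solutions $X_t, Y_t$ of \eqref{SHS} through the same Brownian motion, starting from an optimal $W_2$-coupling of $(\mu_0,\nu_0)$, and work with the weighted form
\[
\Psi(x,y) := r^2|x^{(1)}-y^{(1)}|^2 + 2 r r_0 \langle B(x^{(2)}-y^{(2)}),\, x^{(1)}-y^{(1)}\rangle + |x^{(2)}-y^{(2)}|^2,
\]
which is equivalent to $|x-y|^2$ since $|r_0|\,\|B\|<1$. Because the noise cancels in the difference, an It\^o computation gives a drift term that is precisely the left-hand side of (D3); hence
\[
\tfrac{\d}{\d t}\mathbb{E}\Psi(X_t,Y_t) \leq -2\theta_1 \mathbb{E}|X_t-Y_t|^2 + 2\theta_2 W_2(P_t^{*}\mu_0, P_t^{*}\nu_0)^2 \leq -\tfrac{\theta_1-\theta_2}{C}\mathbb{E}\Psi(X_t,Y_t),
\]
using $W_2^2\leq \mathbb{E}|X_t-Y_t|^2 \leq C\,\mathbb{E}\Psi$ together with $\theta_1>\theta_2$. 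Gronwall then delivers (1). Part (2) follows by the standard Cauchy argument: (1) forces $\{P_t^{*}\mu_0\}_{t\geq 0}$ to be $W_2$-Cauchy, the limit $\bar\mu$ is $P_t^{*}$-invariant by $W_2$-continuity, and \eqref{W2SHS} is (1) applied to $(\mu_0,\bar\mu)$.

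For (3), I would proceed in two steps. First, the MDP for the reference system: (D1) yields $M$ invertible and the Kalman rank condition, hence hypoellipticity of the reference generator, while (D3) restricted to $\mu=\nu=\bar\mu$ provides the dissipativity hypothesis required by \cite{WZ19}; thus \cite{WZ19} gives $\bar L_t^A \in \textrm{MDP}(I)$ with $I(y)=y^2/(8\bar V(A))$ for Lipschitz $A$. Second, the exponential equivalence: couple $(X_t)$ and $(\bar X_t)$ through the common Brownian motion with $\bar X_0 = X_0$, so that $\Delta_t := X_t-\bar X_t$ has no noise term. The same weighted-form calculation as in (1), now applied to $(\mathscr{L}_{X_t},\bar\mu)$ on the right-hand side of (D3), gives
\[
\tfrac{\d}{\d t}\mathbb{E}\Psi(X_t,\bar X_t) \leq -c\,\mathbb{E}\Psi(X_t,\bar X_t) + K\, W_2(\mathscr{L}_{X_t},\bar\mu)^2,
\]
with the last term exponentially decaying in $t$ by \eqref{W2SHS}. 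Feeding this back, $\mathbb{E}|\Delta_t|^2 \leq C e^{-\lambda t}(1+\mathbb{E}|X_0|^2)$ for some $\lambda>0$, and since $A$ is Lipschitz with constant $L_A$, $|L_t^A-\bar L_t^A|\leq (L_A/t)\int_0^t |\Delta_s|\,\d s$, so the required exponential equivalence
\[
\limsup_{t\to\infty} \frac{t}{a^2(t)}\log\mathbb{P}\Big(\int_0^t|\Delta_s|\,\d s > \varepsilon\, a(t)\Big) = -\infty,\qquad \varepsilon>0,
\]
will follow by Chebyshev combined with the exponential integrability $\mathbb{E}e^{\delta|X_0|^2}<\infty$ and $\sqrt{t}/a(t)\to 0$. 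Combining this with the reference MDP closes (3).

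The main obstacle lies in the second step of (3): lifting the $L^2$ decay of $\Delta_t$ to an exponential-tail estimate at the superlinear rate $a^2(t)/t$. This needs the exponential integrability of $|X_0|^2$ to absorb both the initial displacement and a uniform-in-$t$ exponential moment of $\sup_{s\leq t}|X_s|^2 e^{-\lambda s}$ derivable from (D3). A secondary technicality is that \cite{WZ19} delivers the MDP for $\bar X_t$ under the stationary initial law $\bar\mu$, whereas here $\bar X_0=X_0$ has law $\mathscr{L}_{X_0}$; transporting the MDP to this initial distribution requires an additional exponential-equivalence argument between two copies of the reference system started at $\bar\mu$ and at $\mathscr{L}_{X_0}$, again relying on the contraction \eqref{W2SHS} and on $\mathbb{E}e^{\delta|X_0|^2}<\infty$.
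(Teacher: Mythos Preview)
Your treatment of (1) and (2) via the synchronous coupling and the weighted quadratic form $\Psi$ is exactly the paper's approach, up to an irrelevant factor $\tfrac12$ in the definition of $\Psi$. For (3) your overall strategy---reference MDP from \cite{WZ19} plus exponential equivalence in the spirit of Theorem~\ref{ThEeN}---is also the paper's. The one structural difference is that the paper takes $\mathscr{L}_{\bar X_0}=\bar\mu$ rather than $\bar X_0=X_0$; with that choice \cite{WZ19} applies directly to $\bar L_t^A$ and your ``secondary technicality'' (transporting the MDP to a non-stationary initial law) never arises.

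More importantly, the ``main obstacle'' you identify is not an obstacle here, and seeing why simplifies your argument considerably. Because the noise in \eqref{SHS} is additive with constant coefficient $M$, it cancels in the coupled difference, and the weighted-form inequality
\[
\tfrac{\d}{\d t}\Psi(X_t,\bar X_t)\ \le\ -c\,\Psi(X_t,\bar X_t)+K\,W_2(\mathscr{L}_{X_t},\bar\mu)^2
\]
holds \emph{pathwise}, not merely in expectation. Integrating it together with \eqref{W2SHS} gives the almost-sure bound $|X_t-\bar X_t|\le C'(1+|X_0-\bar X_0|)\,e^{-\lambda' t}$ (indeed $|X_t-\bar X_t|\le C'e^{-\lambda' t}$ deterministically in your $\bar X_0=X_0$ setup). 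Hence $\int_0^\infty|X_s-\bar X_s|\,\d s$ is dominated a.s.\ by a linear function of $|X_0-\bar X_0|$, and \eqref{EeN} follows at once from $\mathbb{E}e^{\delta|X_0|^2}<\infty$ together with $\bar\mu(e^{\delta|\cdot|^2})<\infty$. No lifting from $L^2$ to exponential tails, and no control of $\sup_{s}|X_s|^2 e^{-\lambda s}$, is needed; this is precisely what the paper's terse line ``by \eqref{W2SHS}, we can find some small $\delta>0$ such that \eqref{EeN} holds'' is invoking.
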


\section{Proofs of main results}

\subsection{Proof of Theorem \ref{mresult}}

To prove the Theorem \ref{mresult}, we will compare $l_t^A$ with the additive functional for $\bar{X}_t$. Let
\[
\bar{L}_t^A:= \frac 1 t \int_0^t A(\bar{X}_s)\d s,
\]
and
\[
\bar{l}_t^A:=\frac{t}{a(t)}(\bar{L}_t^A-\bar{\mu}(A))=\frac 1 {a(t)} \int_0^t \big(A(\bar{X}_s)-\bar{\mu}(A)\big) \d s,
\]
where $a(t)$ is a positive function satisfying (\ref{at}).

Define the Cram\'{e}r functional of $\bar{l}_t^A$:
\begin{align}\label{Lambdaz}
\Lambda(z)&:=\lim_{t\rightarrow +\infty} \frac{t}{a^2(t)} \log \mathbb{E}_x\bigg[ \exp\Big\{ \frac{a^2(t)}{t} z \bar{l}_t^A \Big\} \bigg]\\
&=\lim_{t\rightarrow +\infty} \frac{t}{a^2(t)} \log \mathbb{E}_x\bigg[ \exp\Big\{ \frac{a(t)}{t} z \int_0^t \big(A(\bar{X}_s)-\bar{\mu}(A)\big)\d s \Big\} \bigg]\nonumber,
\end{align}
where $\mathbb{E}_x$ is the expectation conditioned to $Y_0 = x$, $z$ is a constant. The Legendre transformation of $\Lambda(z)$ is defined by
\[
\Lambda^{*}(y):=\sup_{z\in \mathbb{R}^d}\{ zy-\Lambda(z)\},
\]
which is related to the rate function. According to the G\"{a}rtner-Ellis Theorem and \cite[Theorem 1.3]{Gao17}, $\bar{L}_t^A\in \textrm{MDP}(I)$ for $I(y)={y^2}/({8 \bar{V}(A)})$, $y\in \mathbb{R}$.

Below we introduce the following exponential approximation lemma which is useful in applications, see for instance \cite[Theorem 4.2.16]{DZ98} and \cite[Theorem 3.2]{RWW06}.

\begin{lem}\label{Exappro}(Exponential approximations)
If $\bar{L}_t^A \in \textrm{MDP}_u(I)$(respectively $\textrm{MDP}_l(I)$) and for any $a$ satisfying (\ref{at}),
\[
\lim_{t\rightarrow\infty} \frac {t}{a^2(t)} \log \mathbb{P}( |l_t^A -\bar{l}_t^A|> \varepsilon )=-\infty,\,\,\forall\varepsilon> 0,
\]
then $L_t^A \in \textrm{MDP}_u(I)$(respectively $\textrm{MDP}_l(I)$).
\end{lem}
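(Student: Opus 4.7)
The plan is to execute the standard exponential-equivalence transfer argument, treating the upper and lower MDP bounds separately. The bridge in both cases is the obvious set inclusion that, modulo a super-exponentially negligible event, replaces $l_t^A$ by $\bar l_t^A$ at the price of an $\varepsilon$-enlargement of the target set.

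\textbf{Upper bound.} For a closed $F\subset\mathbb{R}$ and $\varepsilon>0$, let $F_\varepsilon=\{y:d(y,F)\le\varepsilon\}$, which is again closed. The inclusion
\[
\{l_t^A\in F\}\subset \{\bar l_t^A\in F_\varepsilon\}\cup\{|l_t^A-\bar l_t^A|>\varepsilon\}
\]
combined with the standard bound $\log(a+b)\le \log 2+\max\{\log a,\log b\}$ would give
\[
\frac{t}{a^2(t)}\log\mathbb{P}(l_t^A\in F)\le \frac{t}{a^2(t)}\log 2+\max\Big\{\tfrac{t}{a^2(t)}\log\mathbb{P}(\bar l_t^A\in F_\varepsilon),\; \tfrac{t}{a^2(t)}\log\mathbb{P}(|l_t^A-\bar l_t^A|>\varepsilon)\Big\}.
\]
Taking $\limsup_{t\to\infty}$: the leading $\log 2$ term vanishes, the first term inside the max is dominated by $-\inf_{F_\varepsilon}I$ thanks to $\bar L_t^A\in\mathrm{MDP}_u(I)$, and the second term is $-\infty$ by hypothesis. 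Letting $\varepsilon\downarrow 0$ and invoking the lower semicontinuity of $I$ (which forces $\inf_{F_\varepsilon}I\uparrow\inf_F I$ for closed $F$) will then yield the required $\limsup\frac{t}{a^2(t)}\log\mathbb{P}(l_t^A\in F)\le -\inf_F I$.

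\textbf{Lower bound.} For open $G$ and any $y\in G$, choose $\varepsilon>0$ with $(y-2\varepsilon,y+2\varepsilon)\subset G$. From
\[
\{\bar l_t^A\in(y-\varepsilon,y+\varepsilon)\}\setminus\{|l_t^A-\bar l_t^A|>\varepsilon\}\subset\{l_t^A\in G\}
\]
one gets $\mathbb{P}(l_t^A\in G)\ge \mathbb{P}(\bar l_t^A\in(y-\varepsilon,y+\varepsilon))-\mathbb{P}(|l_t^A-\bar l_t^A|>\varepsilon)$. The assumption makes the subtracted term decay faster than $\exp\{-Ka^2(t)/t\}$ for every $K>0$, while $\bar L_t^A\in\mathrm{MDP}_l(I)$ ensures the first term is at least $\exp\{-(I(y)+o(1))a^2(t)/t\}$; hence the ratio of the two tends to $0$ and, writing $\log(a_t-b_t)=\log a_t+\log(1-b_t/a_t)$, one recovers $\liminf\frac{t}{a^2(t)}\log\mathbb{P}(l_t^A\in G)\ge -I(y)$. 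Taking the supremum over $y\in G$ completes the argument.

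\textbf{Anticipated obstacle.} The only nontrivial point is the passage $\inf_{F_\varepsilon}I\to\inf_F I$ as $\varepsilon\downarrow 0$ in the upper bound, which ordinarily requires either that $I$ be a good rate function (compact sublevel sets) or at least lower semicontinuous with a regularity that allows an $\varepsilon$-thickening; for the specific quadratic rate $I(y)=y^2/(8\bar V(A))$ this is automatic, but for the abstract statement of the lemma one should flag the hypothesis that $I$ is lower semicontinuous, so that $\inf$ over an $\varepsilon$-neighborhood converges to $\inf_F I$. The lower bound contains no such subtlety because the test set is reduced to a single point $y$ and the comparison is purely arithmetic once the exponential rates are lined up.
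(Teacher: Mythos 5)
Your proof is correct and follows exactly the standard exponential-equivalence transfer that the paper itself does not prove but only cites (\cite{DZ98}, Theorem 4.2.16, and \cite{RWW06}, Theorem 3.2): the inclusion into an $\varepsilon$-enlarged closed set plus the super-exponentially negligible event for the upper bound, and the single-point comparison $\mathbb{P}(l_t^A\in G)\ge \mathbb{P}(\bar l_t^A\in(y-\varepsilon,y+\varepsilon))-\mathbb{P}(|l_t^A-\bar l_t^A|>\varepsilon)$ for the lower bound, are precisely the classical route, and the arithmetic with $\log 2$ and with $\log(1-b_t/a_t)$ is handled correctly since $a^2(t)/t\to\infty$. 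The only point to adjust is your closing remark: lower semicontinuity of $I$ alone does \emph{not} yield $\inf_{F_\varepsilon}I\to\inf_F I$ as $\varepsilon\downarrow0$ (minimizing sequences in the thickenings can escape to infinity, e.g.\ $F=\mathbb{N}$ with $I=1$ on $F$ and $I(n+1/n)=0$), so one should invoke goodness of the rate function; here $I(y)=y^2/(8\bar{V}(A))$ has compact sublevel sets, so this step is valid and your argument is complete in the paper's setting.
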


Using this Lemma, we prove the following result, which is crucial in the present study.

\begin{thm}\label{ThEeN}
If $\bar{L}_t^A \in \textrm{MDP}_u(I)$(respectively $\textrm{MDP}_l(I)$) and there exists a constant $\delta>0$ such that
\begin{align}\label{EeN}
\mathbb{E}\Big[\exp\Big\{ \delta \int_0^{\infty} |X_s -\bar{X}_s| \d s\Big\} \Big]<\infty,
\end{align}
then $L_t^A \in \textrm{MDP}_u(I)$(respectively $\textrm{MDP}_l(I)$).
\end{thm}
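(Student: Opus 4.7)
The plan is to invoke the exponential approximation lemma (Lemma \ref{Exappro}), so the task reduces to showing
\[
\lim_{t\to\infty} \frac{t}{a^2(t)}\log \mathbb{P}\bigl(|l_t^A-\bar{l}_t^A|>\varepsilon\bigr)=-\infty
\]
for every $\varepsilon>0$. Since the theorem lives in the subsection treating Lipschitz $A$, I will use the Lipschitz constant $L_A$ of $A$ to convert the discrepancy of the functionals into the pathwise discrepancy of $X$ and $\bar X$.

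First I would write, directly from the definitions,
\[
|l_t^A-\bar{l}_t^A|=\frac{1}{a(t)}\Bigl|\int_0^t\bigl(A(X_s)-A(\bar{X}_s)\bigr)\d s\Bigr|\le \frac{L_A}{a(t)}\int_0^t |X_s-\bar{X}_s|\d s\le \frac{L_A}{a(t)}N,
\]
where $N:=\int_0^\infty |X_s-\bar{X}_s|\d s$ is, by hypothesis (\ref{EeN}), exponentially integrable: $\mathbb{E}[e^{\delta N}]<\infty$. Thus $\{|l_t^A-\bar{l}_t^A|>\varepsilon\}\subset \{N>\varepsilon a(t)/L_A\}$, and Chebyshev's exponential inequality gives
\[
\mathbb{P}\bigl(|l_t^A-\bar{l}_t^A|>\varepsilon\bigr)\le \mathbb{E}[e^{\delta N}]\, \exp\Bigl\{-\frac{\delta \varepsilon\, a(t)}{L_A}\Bigr\}.
\]

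Taking logarithms and multiplying by $t/a^2(t)$ yields
\[
\frac{t}{a^2(t)}\log \mathbb{P}\bigl(|l_t^A-\bar{l}_t^A|>\varepsilon\bigr)\le -\frac{\delta\varepsilon}{L_A}\cdot\frac{t}{a(t)}+\frac{t}{a^2(t)}\log \mathbb{E}[e^{\delta N}].
\]
The rate conditions (\ref{at}) say $a(t)/t\to 0$ (so $t/a(t)\to \infty$) and $\sqrt{t}/a(t)\to 0$ (so $t/a^2(t)\to 0$); the first term therefore tends to $-\infty$ and the second to $0$. This delivers the required super-exponential closeness, and Lemma \ref{Exappro} then transfers $\bar L_t^A\in\mathrm{MDP}_u(I)$ (resp.\ $\mathrm{MDP}_l(I)$) to $L_t^A\in\mathrm{MDP}_u(I)$ (resp.\ $\mathrm{MDP}_l(I)$).

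There is no genuine obstacle in this argument; it is a clean Markov-inequality reduction. The substantive difficulty of the paper is pushed to the separate task of verifying (\ref{EeN}), i.e.\ establishing the exponential integrability of $\int_0^\infty |X_s-\bar X_s|\d s$ under (H1)--(H2), which is where the dissipativity bound $\lambda_1>\lambda_2$ and the contraction (\ref{W2}) will be used, together with the assumption $\mathbb{E}[e^{\delta|X_0|^2}]<\infty$.
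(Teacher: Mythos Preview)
Your proof is correct and follows essentially the same route as the paper: bound $|l_t^A-\bar l_t^A|$ via the Lipschitz constant of $A$, apply the exponential Chebyshev inequality together with hypothesis (\ref{EeN}), and conclude from the rate conditions (\ref{at}) and Lemma \ref{Exappro}. The only cosmetic difference is that the paper applies Chebyshev with the $t$-dependent parameter $\sqrt{a(t)/t}\le\delta$ rather than with the fixed $\delta$, giving decay $-\sqrt{t/a(t)}$ in place of your $-t/a(t)$; either suffices.
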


\begin{proof}
Due to that $\lim_{t\rightarrow\infty} \frac{a(t)}{t}=0$, there exists $t_0> 0$ such that $\sqrt{\frac{a(t)}{t}}\leq \delta$ when $t\geq t_0$. Below, we assume that $t\geq t_0$ and we have
\[
\mathbb{P}( |l_t^A -\bar{l}_t^A|> \varepsilon )\leq \mathbb{P}\Big( \sqrt{\frac{a(t)}{t}}\int_0^t |X_s- \bar{X}_s|\d s > \sqrt{\frac{a(t)}{t}} \frac{a(t)\varepsilon}{K} \Big),
\]
by Chebyshev's inequality, we obtain
\begin{align*}
\mathbb{P}( |l_t^A -\bar{l}_t^A|> \varepsilon )
&\leq \frac{\mathbb{E}\Big[\exp\Big\{ \sqrt{\frac{a(t)}{t}}\int_0^t |X_s- \bar{X}_s|\d s \Big\} \Big]}{\exp\{ \sqrt{\frac{a(t)}{t}} \frac{a(t)\varepsilon}{K}\}},
\end{align*}
then (\ref{at}) and (\ref{EeN}) imply that $\forall\varepsilon> 0$,
\begin{align*}
&\lim_{t\rightarrow\infty} \frac {t}{a^2(t)} \log \mathbb{P}( |l_t^A -\bar{l}_t^A|> \varepsilon )\\
&\leq \lim_{t\rightarrow\infty}\frac {t}{a^2(t)} \Big(\log\mathbb{E}\Big[\exp\Big\{ \sqrt{\frac{a(t)}{t}}\int_0^t |X_s- \bar{X}_s|\d s \Big\} \Big]-\sqrt{\frac{a(t)}{t}}\frac{a(t)\varepsilon}{K} \Big)\\
&\leq \lim_{t\rightarrow\infty} \frac {t}{a^2(t)} \log\mathbb{E}\Big[\exp\Big\{ \delta\int_0^t |X_s- \bar{X}_s|\d s \Big\} \Big]- \lim_{t\rightarrow\infty} \sqrt{\frac{t}{a(t)}}\frac{\varepsilon}{K}\\
&=-\infty.
\end{align*}
Then the desired assertion follows from Lemma \ref{Exappro}.
\end{proof}

\begin{proof}[Proof of Theorem \ref{mresult}]
Let $\mathscr{L}_{X_0}=\nu$ and $\mathscr{L}_{\bar{X}_0}=\bar{\mu}$. According to \cite[Theorem 1.1-1.3]{Gao17}, $\bar{L}_t^A\in \textrm{MDP}(I)$. So, it suffices to show (\ref{EeN}) for some $\delta> 0$.

Condition (H1) implies that the reference SDE (\ref{refmSDE}) is well-posed and the solution is a Markov process, $\bar{\mu}$ is the unique invariant probability measure of $P_t^{*}$. Simply denote $X_t=X_t^{\nu}, \bar{X}_t=\bar{X}_t^x$ and $P_t^{*}\nu=\mathscr{L}_{X_t^{\nu}}$ for $\nu\in \mathscr{P}_2(\mathbb{R}^d)$. By It\^{o}'s formula and (H1),
\begin{align*}
\d |X_t- \bar{X}_t|^2 &\leq \big\{ \lambda_2 W_2(P_t^{*}\nu,\bar{\mu})^2-\lambda_1 |X_t- \bar{X}_t|^2  \big\}\d t \\
&\quad + 2\big\langle X_t- \bar{X}_t, \big( \sigma(X_t,P_t^{*}\nu)-\sigma(\bar{X}_t,\bar{\mu}) \big)\d B_t \big\rangle.
\end{align*}
Let $\xi_t=\big( e^{-\lambda t}+|X_t- \bar{X}_t|^2 \big)^{\frac 1 2}$, where $\lambda:=\lambda_1-\lambda_2$. By (\ref{W2}), we find a constant $C>0$ such that
\begin{align*}
\d \xi_t \leq -\frac{\lambda_1}{2} \xi_t \d t+C e^{-\frac{\lambda}{2} t} \d t + \d M_t,
\end{align*}
where $\d M_t= \frac{1}{\xi_t}\big\langle X_t- \bar{X}_t, \big( \sigma(X_t,P_t^{*}\nu)-\sigma(\bar{X}_t,\bar{\mu}) \big)\d B_t \big\rangle$. Therefore, for some $\delta>0$, we obtain that
\begin{align*}
\mathbb{E} \big[e^{\delta \int_0^t \xi_s \d s}\big]
&\leq e^{\frac{4\delta C}{\lambda_1\lambda}}  \mathbb{E} \Big[ e^{\frac{2\delta \xi_0}{\lambda_1}} e^{\frac{2\delta }{\lambda_1} \int_0^t \d M_s}\Big]\\
&=e^{\frac{4\delta C}{\lambda_1\lambda}}  \mathbb{E}\Big[ \mathbb{E} \big[ e^{\frac{2\delta \xi_0}{\lambda_1}} e^{\frac{2\delta }{\lambda_1} \int_0^t \d M_s} | \mathcal{F}_0\big]\Big]\\
&\leq  C(\delta) \Big(\mathbb{E}\Big[ e^{\frac{4\delta \xi_0}{\lambda_1}}\Big]\Big)^{\frac 1 2} \Big(\mathbb{E}\Big[ e^{ \frac{64\delta^3\kappa_2\sqrt{d}}{\lambda_1^2} \int_0^t|X_s- \bar{X}_s| \d s} \Big]\Big)^{\frac 1 4},\,\,\,t> 0
\end{align*}
holds for some constant $C(\delta)>0$. Therefore, we obtain that
$$\mathbb{E} e^{\delta \int_0^\infty |X_s- \bar{X}_s| \d s} \leq \mathbb{E} e^{\delta \int_0^\infty \xi_s \d s}<\infty$$ for $\delta>0$ small enough. Therefore, there exists some constant $\delta> 0$ such that (\ref{EeN}) holds.
\end{proof}

\subsection{Proof of Theorem \ref{mresult2}}

In order to prove Theorem \ref{mresult2}, we need the following result.

\begin{thm}\label{ThEeXY}
If $\bar{L}_t^A \in \textrm{MDP}_u(I)$(respectively $\textrm{MDP}_l(I)$) and there exists a constant $\delta>0$ such that
\begin{align}\label{EeN12}
\mathbb{E}\Big[\exp\Big\{ \delta \int_0^{\infty} |X_s -\bar{X}_s|^{\alpha}\big( 1+|X_s|+|\bar{X}_s| \big)^{2-\alpha} \d s\Big\} \Big]<\infty,
\end{align}
then $L_t^A \in \textrm{MDP}_u(I)$(respectively $\textrm{MDP}_l(I)$).
\end{thm}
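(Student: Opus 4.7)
The plan is to imitate the structure of the proof of Theorem \ref{ThEeN} verbatim, substituting the Lipschitz bound on $A$ with the quantitative H\"older-type estimate from the hypothesis of Theorem \ref{mresult2}. Concretely, since $A$ satisfies
\[
|A(x)-A(y)|\leq K\,|x-y|^{\alpha}\bigl(1+|x|+|y|\bigr)^{2-\alpha},\qquad x,y\in\mathbb{R}^d,
\]
for some constant $K$, I would first bound
\[
|l_t^A-\bar l_t^A|\leq \frac{K}{a(t)}\int_0^t |X_s-\bar X_s|^{\alpha}\bigl(1+|X_s|+|\bar X_s|\bigr)^{2-\alpha}\d s,
\]
so the problem reduces, exactly as in Theorem \ref{ThEeN}, to proving that
\[
\lim_{t\to\infty}\frac{t}{a^2(t)}\log \mathbb{P}\bigl(|l_t^A-\bar l_t^A|>\varepsilon\bigr)=-\infty,\qquad \forall\,\varepsilon>0,
\]
and then invoking Lemma \ref{Exappro}.

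Next, I would use the scaling trick already deployed in Theorem \ref{ThEeN}: choose $t$ large enough that $\sqrt{a(t)/t}\leq \delta$ (possible by (\ref{at})), and apply Chebyshev's inequality with the multiplier $\sqrt{a(t)/t}$. This gives
\[
\mathbb{P}\bigl(|l_t^A-\bar l_t^A|>\varepsilon\bigr)\leq \exp\!\Bigl\{-\sqrt{\tfrac{a(t)}{t}}\,\tfrac{a(t)\varepsilon}{K}\Bigr\}\,\mathbb{E}\!\left[\exp\!\Bigl\{\sqrt{\tfrac{a(t)}{t}}\!\int_0^t|X_s-\bar X_s|^{\alpha}\bigl(1+|X_s|+|\bar X_s|\bigr)^{2-\alpha}\d s\Bigr\}\right].
\]
Because $\sqrt{a(t)/t}\leq\delta$, the exponential moment is dominated by
\[
\mathbb{E}\!\left[\exp\!\Bigl\{\delta\!\int_0^{\infty}|X_s-\bar X_s|^{\alpha}\bigl(1+|X_s|+|\bar X_s|\bigr)^{2-\alpha}\d s\Bigr\}\right],
\]
which is finite by assumption (\ref{EeN12}). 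Taking logarithms and multiplying by $t/a^2(t)$, the expectation term contributes $O(t/a^2(t))\to 0$, while the deterministic factor contributes
\[
-\,\frac{t}{a^2(t)}\cdot\sqrt{\tfrac{a(t)}{t}}\cdot\tfrac{a(t)\varepsilon}{K}\;=\;-\,\sqrt{\tfrac{t}{a(t)}}\cdot\tfrac{\varepsilon}{K}\;\to\;-\infty
\]
by (\ref{at}). Hence the exponential equivalence holds and Lemma \ref{Exappro} transfers the MDP from $\bar L_t^A$ to $L_t^A$.

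The only genuinely nontrivial input is (\ref{EeN12}) itself; the above argument merely packages the H\"older growth of $A$ into a form to which the Chebyshev--Markov scheme of Theorem \ref{ThEeN} applies unchanged. Consequently, no new estimate on the coupling $(X_t,\bar X_t)$ is needed at this stage, and the proof of Theorem \ref{ThEeXY} reduces to a line-by-line adaptation of Theorem \ref{ThEeN} with the Lipschitz functional $|X_s-\bar X_s|$ replaced by $|X_s-\bar X_s|^{\alpha}(1+|X_s|+|\bar X_s|)^{2-\alpha}$.
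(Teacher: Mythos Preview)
Your proposal is correct and is precisely the approach the paper has in mind: the paper omits the proof of Theorem \ref{ThEeXY} entirely, writing only ``The proof is similar to that of Theorem \ref{ThEeN}, so we omit to save space.'' Your line-by-line adaptation of the Chebyshev argument with $|X_s-\bar X_s|$ replaced by $|X_s-\bar X_s|^{\alpha}(1+|X_s|+|\bar X_s|)^{2-\alpha}$, together with the observation that $t/a^2(t)\to 0$ kills the bounded log-expectation term while $\sqrt{t/a(t)}\to\infty$ drives the deterministic term to $-\infty$, is exactly the intended argument.
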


The proof is similar to that of Theorem \ref{ThEeN}, so we omit to save space.

\begin{proof}[Proof of Theorem \ref{mresult2}]
Let $\mathscr{L}_{X_0}=\nu$ and $\mathscr{L}_{\bar{X}_0}=\bar{\mu}$. According to \cite[Theorem 2.1]{WZ19}, $\bar{L}_t^A\in \textrm{MDP}(I)$ for $I(y)={y^2}/({8 \bar{V}(A)})$, $y\in \mathbb{R}$. Therefore, it suffices to show (\ref{EeN12}) for some $\delta> 0$.

The assumption (H1) and (\ref{W2}) yield
\[
\|\sigma(P_t^{*}\nu)-\sigma(\bar{\mu})\|_{HS}^2\leq \lambda_2 W_2(P_t^{*}\nu,\bar{\mu})^2\leq \lambda_2 e^{-(\lambda_2-\lambda_1)t}W_2(\nu,\bar{\mu})^2.
\]
By Young's inequality and $C_r$ inequality, for any $\lambda>0$,
\begin{align*}
|X_t -\bar{X}_t|^{\alpha}\big( 1+|X_t|+|\bar{X}_t| \big)^{2-\alpha}&=|X_t -\bar{X}_t|^{\alpha} e^{\lambda t} e^{-\lambda t}\big( 1+|X_t|+|\bar{X}_t| \big)^{2-\alpha}\\
&\leq \frac {\alpha} 2 e^{\frac {2\lambda t}{\alpha}} |X_t- \bar{X}_t|^2+ \frac {2-\alpha} 2 e^{-\frac {2\lambda t}{2-\alpha}} 3 \big( 1+|X_t|^2+|\bar{X}_t|^2 \big).
\end{align*}
Below, for simplicity, we take $\lambda<\frac{\alpha(\lambda_1-\lambda_2)}{2}$. By H\"{o}lder's inequality, we have
\begin{align*}
&\mathbb{E}\Big[e^{ \delta \int_0^{\infty} |X_s -\bar{X}_s|^{\alpha}( 1+|X_s|+|\bar{X}_s| )^{2-\alpha} \d s} \Big]\\
&\leq \Big(\mathbb{E}\Big[e^{ \alpha\delta \int_0^{\infty} e^{\frac{2\lambda s}{\alpha}}|X_s -\bar{X}_s|^2 \d s} \Big]\Big)^{\frac 1 2}  \Big(\mathbb{E}\Big[e^{ 3(2-\alpha)\delta\int_0^{\infty} e^{-\frac{2\lambda s}{2-\alpha}}( 1+|X_s|^2+|\bar{X}_s|^2 ) \d s} \Big]\Big)^{\frac 1 2}.
\end{align*}
Below search for a constant $\delta> 0$ such that
\[
I_1:=\mathbb{E}\Big[e^{ \alpha\delta \int_0^{\infty} e^{\frac{2\lambda s}{\alpha}}|X_s -\bar{X}_s|^2 \d s} \Big]<\infty,
\]
and
\[
I_2:=\mathbb{E}\Big[e^{ 3(2-\alpha)\delta\int_0^{\infty} e^{-\frac{2\lambda s}{2-\alpha}}( 1+|X_s|^2+|\bar{X}_s|^2 ) \d s} \Big]<\infty.
\]

By It\^{o}'s formula and (H1), we have
\[
\d |X_t- \bar{X}_t|^2\leq \big( \lambda_2 W_2(P_t^{*}\nu,\bar{\mu})^2- \lambda_1 |X_t- \bar{X}_t|^2 \big)\d t+2\langle X_t- \bar{X}_t,\big(\sigma(P_t^{*}\nu)-\sigma(\bar{\mu})\big)\d B_t \rangle.
\]
By the chain rule, we obtain
\begin{align*}
&\d \{e^{\frac{2\lambda t}{\alpha}}|X_t -\bar{X}_t|^2\}\\
&\leq e^{\frac{2\lambda t}{\alpha}}\Big\{\frac{2\lambda }{\alpha}|X_t -\bar{X}_t|^2+\lambda_2 W_2(P_t^{*}\nu,\bar{\mu})^2- \lambda_1 |X_t -\bar{X}_t|^2 \Big\}\d t\\
&\,\,\,\,\,+ 2 e^{\frac{2\lambda t}{\alpha}}\langle X_t -\bar{X}_t,\big(\sigma(P_t^{*}\nu)-\sigma(\bar{\mu})\big)\d B_t \rangle.
\end{align*}
Therefore, we obtain that
\begin{align*}
&\mathbb{E}\Big[e^{ \alpha\delta \int_0^t e^{\frac{2\lambda s}{\alpha}}|X_s -\bar{X}_s|^2 \d s} \Big]\\
&\leq C_1(\delta)\mathbb{E}\Big[ e^{\frac{\alpha \delta |X_0 -\bar{X}_0|^2}{\lambda_1-2\lambda/{\alpha}}+\frac{2\alpha\delta}{\lambda_1-2\lambda/{\alpha}}
\int_0^t e^{\frac{2\lambda s}{\alpha}}\langle X_s -\bar{X}_s,(\sigma(P_s^{*}\nu)-\sigma(\bar{\mu}))\d B_s \rangle} \Big]\\
&= C_1(\delta)\mathbb{E}\Big[ \mathbb{E}\big[ e^{\frac{\alpha \delta |X_0 -\bar{X}_0|^2}{\lambda_1-2\lambda/{\alpha}}+\frac{2\alpha\delta}{\lambda_1-2\lambda/{\alpha}}
\int_0^t e^{\frac{2\lambda s}{\alpha}}\langle X_s -\bar{X}_s,(\sigma(P_s^{*}\nu)-\sigma(\bar{\mu}))\d B_s \rangle} | \mathcal{F}_0\big]\Big]\\
&\leq C_1(\delta)\Big( \mathbb{E}\Big[e^{\frac{\alpha \delta |X_0 -\bar{X}_0|^2}{\lambda_1-2\lambda/{\alpha}}}\Big]\Big)^{\frac 1 2}\Big( \mathbb{E}\Big[ e^{\frac{32 \alpha^2\delta^2 \lambda_2 W_2(\nu,\bar{\mu})^2}{(\lambda_1-2\lambda/{\alpha})^2}\int_0^t e^{\frac{2\lambda s}{\alpha}}|X_s -\bar{X}_s|^2 \d s } \Big] \Big)^{\frac 1 4},
\end{align*}
where $C_1(\delta):=e^{\frac{\alpha\delta \lambda_2 W_2(\nu,\bar{\mu})^2 }{(\lambda_1-2\lambda/{\alpha})(\lambda_1-\lambda_2-2\lambda/{\alpha})}}$. We choose $\delta$ such that $\delta\leq \frac{(\lambda_1-2\lambda/{\alpha})^2}{32 \alpha\lambda_2 W_2(\nu,\bar{\mu})^2} $, which leads to $I_1<\infty$.

Next we need to prove that $I_2<\infty$. By (B1), there exist constants $c_1,c_2> 0$ such that
\begin{align}\label{dXt1}
\d |X_t|^2\leq \big( c_2- c_1 |X_t|^2 +c_2 W_2(P_t^{*}\nu,\bar{\mu})^2 \big)\d t+2\langle X_t, \sigma(P_t^{*}\nu)\d B_t\rangle,
\end{align}
\begin{align}\label{dXt2}
\d |\bar{X}_t|^2\leq \big( c_2-c_1|\bar{X}_t|^2 + c_2 \|\bar{\mu}\|_2^2 \big)\d t+2\langle \bar{X}_t, \sigma(\bar{\mu})\d B_t\rangle,
\end{align}
and
\[
\|\sigma(P_t^{*}\nu)\|_{HS}^2\leq c_2 \big( 1+ W_2(P_t^{*}\nu,\bar{\mu})^2 \big).
\]
Recall that
\begin{align*}
I_2&:=\mathbb{E}\Big[e^{ 3(2-\alpha)\delta\int_0^{\infty} e^{-\frac{2\lambda s}{2-\alpha}}( 1+|X_s|^2+|\bar{X}_s|^2 ) \d s} \Big]\\
&= \mathbb{E}\Big[e^{ 3(2-\alpha)\delta \int_0^{\infty} e^{-\frac{2\lambda s}{2-\alpha}}\d s+3(2-\alpha)\delta \int_0^{\infty} e^{-\frac{2\lambda s}{2-\alpha}} |X_s|^2 \d s+3(2-\alpha)\delta \int_0^{\infty} e^{-\frac{2\lambda s}{2-\alpha}}|\bar{X}_s|^2 \d s} \Big]\\
&\leq e^{\frac{3\delta(2-\alpha)^2}{2\lambda}}\Big( \mathbb{E}\Big[e^{6 \delta{(2-\alpha)} \int_0^{\infty} e^{-\frac{2\lambda s}{2-\alpha}}|X_s|^2 \d s} \Big] \Big)^{\frac 1 2}\Big( \mathbb{E}\Big[e^{6 \delta{(2-\alpha)} \int_0^{\infty} e^{-\frac{2\lambda s}{2-\alpha}}|\bar{X}_s|^2 \d s} \Big] \Big)^{\frac 1 2}.
\end{align*}
Thus, for $I_2< \infty$, it suffices to show
\[
I_2':=\mathbb{E}\Big[e^{6 \delta{(2-\alpha)} \int_0^{\infty} e^{-\frac{2\lambda s}{2-\alpha}}|X_s|^2 \d s} \Big]<\infty,
\]
and
\[
I_2'':=\mathbb{E}\Big[e^{6 \delta{(2-\alpha)} \int_0^{\infty} e^{-\frac{2\lambda s}{2-\alpha}}|\bar{X}_s|^2 \d s} \Big]<\infty.
\]

By the chain rule and (\ref{dXt1}), we have
\begin{align*}
&\d \{e^{-\frac{2\lambda t}{2-\alpha}}|X_t|^2\}\\
&\leq e^{-\frac{2\lambda t}{2-\alpha}}\big\{\big(-\frac{2\lambda}{2-\alpha}-c_1\big)|X_t|^2 +c_2 + c_2 W_2(P_t^{*}\nu,\bar{\mu})^2 \big\}\d t+ 2 e^{-\frac{2\lambda t}{2-\alpha}}\langle X_t,\sigma(P_t^{*}\nu)\d B_t \rangle.
\end{align*}
Then
\begin{align*}
&\mathbb{E}\Big[e^{6 \delta{(2-\alpha)} \int_0^{t} e^{-\frac{2\lambda s}{2-\alpha}}|X_s|^2 \d s} \Big]\\
&\leq C_2(\delta) \mathbb{E}\Big[ e^{\frac{6 \delta{(2-\alpha)}|X_0|^2}{2\lambda/(2-\alpha)+c_1}}e^{\frac{12 \delta{(2-\alpha)}}{2\lambda/(2-\alpha)+c_1}\int_0^t e^{-\frac{2\lambda s}{2-\alpha}}\langle X_s,\sigma(P_s^{*}\nu)\d B_s \rangle} \Big]\\
&\leq C_2(\delta)\Big( \mathbb{E}\Big[e^{\frac{12 \delta{(2-\alpha)}|X_0|^2}{2\lambda/(2-\alpha)+c_1}}\Big]\Big)^{\frac 1 2}\Big( \mathbb{E}\Big[ e^{\frac{288 \delta^2 (2-\alpha)^4 c_2(1+W_2(\nu,\bar{\mu})^2)}{(2\lambda+c_1(2-\alpha))^2}\int_0^t e^{-\frac{2\lambda s}{2-\alpha}}|X_s|^2 \d s } \Big] \Big)^{\frac 1 4},
\end{align*}
where $C_2(\delta)=\exp\big\{6 c_2 \delta(2-\alpha)^3\big(\frac{1}{4\lambda^2+2\lambda c_1(2-\alpha)}+\frac{1}{(2\lambda +c_1(2-\alpha))(2\lambda+(\lambda_1-\lambda_2)(2-\alpha))}\big)\big\}$. Then for $\delta\leq\frac{(2\lambda+c_1(2-\alpha))^2}{48 \delta (2-\alpha)^3 c_2(1+W_2(\nu,\bar{\mu})^2)}$, we have $I_2'<\infty$.

On the other hand, the same argument gives
\begin{align*}
\mathbb{E}\Big[e^{6 \delta{(2-\alpha)} \int_0^{t} e^{-\frac{2\lambda s}{2-\alpha}}|\bar{X}_s|^2 \d s} \Big]\leq C_3(\delta)\Big( \mathbb{E}\Big[e^{\frac{12 \delta{(2-\alpha)}|\bar{X}_0|^2}{2\lambda/(2-\alpha)+c_1}}\Big]\Big)^{\frac 1 2} \Big(\mathbb{E}\Big[ e^{\frac{288  \delta^2(2-\alpha)^4 \|\sigma(\bar{\mu})\|_{HS}^2}{(2\lambda+c_1(2-\alpha))^2}\int_0^t e^{-\frac{2\lambda s}{2-\alpha}} |\bar{X}_s|^2 \d s} \Big]\Big)^{\frac 1 4}
\end{align*}
where $C_3(\delta)=\exp\{\frac{6\delta(2-\alpha)^3 c_2(1+ \|\bar{\mu}\|_2^2)}{4\lambda^2+2\lambda c_1(2-\alpha)}\}$. So, when $\delta\leq\frac{(2\lambda+c_1(2-\alpha))^2}{48 \delta (2-\alpha)^3 \|\sigma(\bar{\mu})\|_{HS}^2}$ we have $I_2''<\infty$.

Finally, we take $\delta\leq \min\Big\{ \frac{(\lambda_1-2\lambda/{\alpha})^2}{32 \alpha\lambda_2 W_2(\nu,\bar{\mu})^2}, \frac{(2\lambda+c_1(2-\alpha))^2}{48 \delta (2-\alpha)^3 c_2(1+W_2(\nu,\bar{\mu})^2)}, \frac{(2\lambda+c_1(2-\alpha))^2}{48 \delta (2-\alpha)^3 \|\sigma(\bar{\mu})\|_{HS}^2}\Big\}$. We conclude, there exists $\delta>0$ such that $I_2<\infty$, which together with $I_1<\infty$ finishes the proof.
\end{proof}

\subsection{Proof of Theorem \ref{mresult3}}

Let $\mathscr{L}_{X_0}=\nu$ and $\mathscr{L}_{\bar{X}_0}=\bar{\mu}$. According to \cite[Theorem 2.1]{WZ19}, $\bar{L}_t^A\in \textrm{MDP}(I)$ for $I(y)={y^2}/({8 \bar{V}(A)})$, $y\in \mathbb{R}$. Therefore, by the Lemma \ref{Exappro} (see also \cite[Theorem 4.2.16]{DZ98} or \cite[Theorem 3.2]{RWW06}), it suffices to prove
\begin{align}\label{EeN1}
\mathbb{E}\Big[\exp\Big\{ \delta \int_0^{\infty} \frac{\big( 1+|X_s|^2+|\bar{X}_s|^2 \big)}{\log(e+|X_s|^2+|\bar{X}_s|^2)[\log(e+|X_s-\bar{X}_s|^{-1})]^{p}
} \d s\Big\} \Big]<\infty
\end{align}
for some constant $\delta> 0$.

By the chain rule and (H1), we have
\begin{align*}
\d \big( e^{\lambda_1 t}|X_t- \bar{X}_t|^2\big)&= e^{\lambda_1 t}\big\{ \lambda_1|X_t-\bar{X}_t|^2 \d t+ \d |X_t- \bar{X}_t|^2 \big\}  \\
&\leq \lambda_2 e^{\lambda_1 t} W_2(P_t^{*}\nu,\bar{\mu})^2 \d t.
\end{align*}
Then we obtain
\begin{align*}
|X_t- \bar{X}_t|^2 \leq e^{-(\lambda_1-\lambda_2)t} W_2(\nu,\bar{\mu})^2,
\end{align*}
this implies that
\begin{align*}
|X_t- \bar{X}_t|^{-1} \geq e^{\frac{\lambda_1-\lambda_2}{2}t}W_2(\nu,\bar{\mu})^{-1}.
\end{align*}
Let $\alpha=\sup_{t\geq0}\big(|X_t|^2+|\bar{X}_t|^2 \big)$, $\beta=W_2(\nu,\bar{\mu})$, $\lambda=\frac{\lambda_1-\lambda_2}{2}$, then we have
\begin{align*}
\int_0^\infty |A(X_t)-A(\bar{X}_t)|\d t\leq \frac{e+\alpha}{\log(e+\alpha)} \int_0^\infty \frac{\d t}{[\log(e+\beta^{-1} e^{\lambda t})]^p}.
\end{align*}
Let $\beta^{-1}e^{\lambda t}=s$, then we have $\d t=\frac{\d s}{\lambda s} $, so that
\begin{align*}
\int_0^\infty \frac{\d t}{[\log(e+\beta^{-1} e^{\lambda t})]^p}
&=\frac{1}{\lambda} \int_{\beta^{-1}}^\infty \frac{\d s}{s[\log(e+s)]^p} \\
&\leq \frac{1}{\lambda} \int_{\beta^{-1}}^{\beta^{-1}+1}\frac{\d s}{s}+\frac{1}{\lambda}  \int_{\beta^{-1}+1}^\infty \big(1+\frac{e}{1+\beta^{-1}}\big)\big(\log(e+s)\big)^{-p} \d{\log(e+s)}\\
&\leq \frac{\log(1+\beta)}{\lambda}+\frac{1+e}{\lambda(p-1)}.
\end{align*}
Thus,
\begin{align}\label{J}
\int_0^\infty |A(X_t)-A(\bar{X}_t)|\d t\leq \frac{1}{\lambda}\Big\{\frac{(e+\alpha)(1+e)}{p-1} + J \Big\},
\end{align}
where $J:=\frac{e+\alpha}{\log(e+\alpha)}\cdot\log(e+\beta)$. Let
$$h(\alpha)=\frac{e+\alpha}{\log(e+\alpha)}\cdot\log(e+\beta)-\alpha.$$

When $\alpha\geq\beta$, we have $h'(\alpha)\leq 0$, which implies that $h$ decreases with respect to $\alpha$, and we obtain that $J\leq \alpha+e$.

When $0<\alpha<\beta$, let $g(\alpha)=\frac{e+\alpha}{\log(e+\alpha)}$, we have $g'(\alpha)\geq 0$, which implies that $g$ increases in $\alpha$, so that $\sup_{\alpha\in(0,\beta)} h(\alpha)\leq \frac{e+\beta}{\log(e+\beta)}\cdot\log(e+\beta)=e+\beta$.

Combining this with (\ref{J}), we find a constant $C_0> 0$ such that
\begin{align*}
\int_0^\infty |A(X_t)-A(\bar{X}_t)|\d t&\leq C_0(e+\alpha+\beta)\\
&= C_0 \Big\{\sup_{t>0}\{ |X_t|^2+|\bar{X}_t|^2 \} +W_2(\nu,\bar{\mu}) +e \Big\}.
\end{align*}
Since $\mathbb{E}[e^{\delta |X_0|^2}]+\bar{\mu}(e^{\delta |\cdot|^2})<\infty$ for some $\delta> 0$ and $\mathscr{L}_{\bar{X}_t}=\bar{\mu}$, (\ref{EeN1}) follows if
\begin{align}\label{EeXt}
\mathbb{E}\Big[ \sup_{t>0}  e^{ \delta |X_t|^2}  \Big]<\infty
\end{align}
holds for some $\delta> 0$.

Indeed, (\ref{EeXt}) holds also for $\bar{X}_t$ replacing $X_t$, since when $\mathscr{L}_{\bar{X}_0}=\bar{\mu}$, we have $\mathscr{L}_{(X_t)_{t\geq 0}}=\mathscr{L}_{(\bar{X}_t)_{t\geq 0}}$.

By (H1), there exists a constant $C_1>0$ such that
\begin{align*}
\d \left( e^{(\lambda_1-\lambda_2) t}|X_t|^2 \right)\leq C_1 e^{(\lambda_1-\lambda_2) t}\d t+ 2 e^{(\lambda_1-\lambda_2) t}\langle X_t, \sigma \d B_t\rangle.
\end{align*}
So,
\begin{align*}
\delta |X_t|^2 \leq \frac{\delta C_1}{\tilde{\lambda}} +\delta e^{-\tilde{\lambda} t}|X_0|^2 + 2 \delta e^{-\tilde{\lambda} t} \int_0^t e^{\tilde{\lambda} s} \langle X_s, \sigma \d B_s \rangle,
\end{align*}
where $\tilde{\lambda}:=\lambda_1-\lambda_2$. Therefore, we obtain
\begin{align}\label{EeXs}
\mathbb{E}\Big[ \sup_{0\leq s\leq t}  e^{ \delta |X_s|^2}  \Big]&\leq
e^{\delta C_1/{\tilde{\lambda}}} \mathbb{E}\Big[\mathbb{E}\Big[ \sup_{0\leq s\leq t}e^{ \delta |X_0|^2} \cdot e^{ 2 \delta e^{-\tilde{\lambda} s} \int_0^s e^{\tilde{\lambda} u} \langle X_u, \sigma \d B_u \rangle}| \mathcal{F}_0 \Big]\Big]\nonumber\\
&\leq e^{\delta C_1/{\tilde{\lambda}}} \Big(\mathbb{E}\Big[ e^{ 2 \delta |X_0|^2 } \Big]\Big)^{\frac 1 2}\cdot \Big(\mathbb{E}\Big[ \sup_{0\leq s\leq t} e^{ 4 \delta e^{-\tilde{\lambda} s} \int_0^s e^{\tilde{\lambda} u} \langle X_u, \sigma \d B_u \rangle} \Big]\Big)^{\frac 1 2}.
\end{align}
By the BDG inequality, there exists a constant $C_2> 0$, such that
\begin{align*}
\tilde{J}:=\mathbb{E}\Big[ \sup_{0\leq s\leq t} e^{ 4 \delta e^{-\tilde{\lambda} s} \int_0^s e^{\tilde{\lambda} u} \langle X_u, \sigma \d B_u \rangle} \Big]&\leq C_2 \Big(\mathbb{E}\Big[ e^{ 16 \delta^2 e^{-2\tilde{\lambda} t} \int_0^t e^{2\tilde{\lambda} u} |\sigma^{*} X_u|^2 \d u } \Big]\Big)^{\frac 1 2}\\
&=C_2 \Big(\mathbb{E}\Big[ e^{ 16 \delta^2 \int_0^t \|\sigma\|^2 |X_s|^2 \frac{1-e^{-2\tilde{\lambda} t}}{2\tilde{\lambda}} \frac{2\tilde{\lambda}}{1-e^{-2\tilde{\lambda} t}} e^{-2\tilde{\lambda} (t-s)} \d s } \Big]\Big)^{\frac 1 2}.
\end{align*}
Since $\lambda_t(\d s):=\frac{2\tilde{\lambda}}{1-e^{-2\tilde{\lambda} t}} e^{-2\tilde{\lambda} (t-s)} \d s$ is a probability measure on $[0,t]$, therefore by the Jensen's inequality, we obtain
\begin{align*}
\tilde{J}&\leq C_2 \Big(\mathbb{E}\Big[ e^{ \frac{16 \delta^2 (1-e^{-2\tilde{\lambda} t})}{2\tilde{\lambda}} \int_0^t \|\sigma\|^2 |X_s|^2 \lambda_t (\d s)} \Big]\Big)^{\frac 1 2}\\
&\leq C_2 \Big(\mathbb{E}\Big[ \int_0^t  e^{ \frac{8 \delta^2 \|\sigma\|^2 |X_s|^2 }{\tilde{\lambda}} }\lambda_t (\d s) \Big]\Big)^{\frac 1 2}.
\end{align*}
When $t\geq 1$, we have
\begin{align*}
\tilde{J}\leq \frac{C_2^2}{4\tilde{\lambda}}+\tilde{\lambda}\int_0^t \mathbb{E}\Big[ e^{  \delta |X_s|^2 -2\tilde{\lambda}(t-s)} \Big]\d s.
\end{align*}
Substituting into (\ref{EeXs}) and applying the Gronwall's lemma, we obtain
\begin{align}\label{EeXs2}
\mathbb{E}\Big[ \sup_{0\leq s\leq t} e^{ \delta  |X_s|^2 } \Big]&\leq C_3\Big( 1+ \mathbb{E}\Big[ \sup_{0\leq s\leq 1} e^{ \delta  |X_s|^2 } \Big]\Big)
\end{align}
for some constant $C_3> 0$. Finally,
\begin{align*}
\mathbb{E}\Big[ \sup_{0\leq t\leq 1} e^{ \delta  |X_t|^2 } \Big]\leq
\Big(\mathbb{E}\Big[ e^{2\delta |X_0|^2}\Big]\Big)^{\frac 1 2}\cdot e^{c\delta \int_0^1 (1+W_2(P_t^{*}\nu,\bar{\mu})^2)\d t}  \cdot \Big(\mathbb{E}\Big[ \sup_{0\leq t\leq 1} e^{4\delta \int_0^t \langle X_s,\sigma \d B_s\rangle }\Big]\Big)^{\frac 1 2}.
\end{align*}
By the exponential martingale inequality and the Jensen's inequality, we obtain that there exists a constant $C_\delta$ such that
\begin{align*}
\mathbb{E}\Big[ \sup_{0\leq t\leq 1} e^{ \delta  |X_t|^2 } \Big]&\leq
C_\delta \sqrt{e} \Big( \int_0^1 \mathbb{E}\big[ e^{16\delta^2 \|\sigma\|^2 |X_s|^2}\big]\d s \Big)^{\frac 1 4}.
\end{align*}
Taking $\delta\leq {1}/({ 16 \|\sigma\|^2})$ and we obtain that $\mathbb{E}\Big[ \sup_{0\leq t\leq 1} e^{ \delta  |X_t|^2 } \Big]<\infty$. This together with (\ref{EeXs2}) imply that $\mathbb{E}\Big[ \sup_{0\leq s\leq t} e^{ \delta  |X_s|^2 } \Big]<\infty$.

\subsection{Proof of Theorem \ref{mreSHS}}

Let
\[
\rho(\xi_1,\xi_2):=\left(|\xi_1^{(1)}-\xi_2^{(1)}|^2+|\xi_1^{(2)}-\xi_2^{(2)}|^2\right)^{1/2}.
\]
We take $X_0,Y_0\in L^2(\Omega\rightarrow\mathbb{R}^{m+d},\mathcal{F}_0,\mathbb{P})$ such that $\mathscr{L}_{X_0}=\mu_0,\mathscr{L}_{Y_0}=\nu_0$ and
\[
W_2(\mu_0,\nu_0)^2=\mathbb{E} \rho(X_0,Y_0)^2.
\]
Let $X_t=(X_t^{(1)},X_t^{(2)})$ and $Y_t=(Y_t^{(1)},Y_t^{(2)})$ solve (\ref{mreSHS}) with initial values $X_0$ and $Y_0$ respectively. Obviously, $X_t^{(1)}-Y_t^{(1)}$ and $X_t^{(2)}-Y_t^{(2)}$ solve the ODE
\begin{align}\label{ODE}
\left\{
\begin{aligned}
\d (X_t^{(1)}-Y_t^{(1)}) & =  \Big(A (X_t^{(1)}-Y_t^{(1)})+B (X_t^{(2)}-Y_t^{(2)})\Big) \d t, \\
\d (X_t^{(2)}-Y_t^{(2)}) & =  \Big(Z( X_t, \mathscr{L}_{X_t})-Z( Y_t, \mathscr{L}_{Y_t})\Big) \d t.
\end{aligned}
\right.
\end{align}
Since $r_0\in(-\|B\|^{-1},\|B\|^{-1})$, for any $r>0$ there exists a constant $C>1$ such that
\begin{align*}
&\frac 1 C \big(|X_t^{(1)}-Y_t^{(1)}|^2+|X_t^{(2)}-Y_t^{(2)}|^2\big)\\
&\leq \Psi_t:=\frac{r^2}{2}|X_t^{(1)}-Y_t^{(1)}|^2+\frac 1 2 |X_t^{(2)}-Y_t^{(2)}|^2+r r_0\langle X_t^{(1)}-Y_t^{(1)}, B(X_t^{(2)}-Y_t^{(2)}) \rangle\\
&\leq C \big(|X_t^{(1)}-Y_t^{(1)}|^2+|X_t^{(2)}-Y_t^{(2)}|^2\big).
\end{align*}
Combining this with (\ref{ODE}) and (D3), we obtain
\begin{align*}
\d \Psi_t &\leq -\theta_1 \big(|X_t^{(1)}-Y_t^{(1)}|^2+|X_t^{(2)}-Y_t^{(2)}|^2\big)+\theta_2 W_2(P_t^{*}\mu_0,P_t^{*}\nu_0)^2,
\end{align*}
by the chain rule, we have
\begin{align*}
\d ( e^{\lambda t}\Psi_t) &\leq e^{\lambda t} \big\{ \lambda \Psi_t -\theta_1 \big(|X_t^{(1)}-Y_t^{(1)}|^2+|X_t^{(2)}-Y_t^{(2)}|^2\big)+\theta_2 W_2(P_t^{*}\mu_0,P_t^{*}\nu_0)^2 \big\}\d t,
\end{align*}
thus we obtain
\begin{align*}
\mathbb{E} \Psi_t &\leq e^{-\lambda t} \mathbb{E} \Psi_0- e^{-\lambda t} \int_0^t e^{\lambda s}(\theta_1-\theta_2-\lambda C) \mathbb{E}\big[ |X_s^{(1)}-Y_s^{(1)}|^2+|X_s^{(2)}-Y_s^{(2)}|^2 \big] \d s,
\end{align*}
we take $\lambda=\frac{\theta_1-\theta_2}{2 C}$ and we obtain
\begin{align*}
\mathbb{E} \Psi_t &\leq e^{-\frac{\theta_1-\theta_2}{2 C} t} \mathbb{E} \Psi_0,
\end{align*}
and we deduce that
\begin{align*}
W_2(P_t^{*}\mu_0,P_t^{*}\nu_0)^2 &\leq \mathbb{E}[ \rho(X_t,Y_t)^2 ]\\
&\leq C e^{-\frac{\theta_1-\theta_2}{2 C} t} W_2(\mu_0,\nu_0)^2 ,
\end{align*}
Consequently, $P_t^{*}$ has a unique invariant probability measure $\bar{\mu}$ such that (\ref{W2SHS}) holds.

Next, let $\mathscr{L}_{\bar{X}_0}=\bar{\mu}$, consider the reference Stochastic Hamiltonian System
for $\bar{X}_t=(\bar{X}_t^{(1)}, \bar{X}_t^{(2)})$ on $\mathbb{R}^{m+d}$:
\begin{align}\label{reSHS}
\left\{
\begin{aligned}
\d \bar{X}_t^{(1)} & =  (A \bar{X}_t^{(1)}+B \bar{X}_t^{(2)}) \d t, \\
\d \bar{X}_t^{(2)} & =  Z( \bar{X}_t, \bar{\mu}) dt+M \d B_t.
\end{aligned}
\right.
\end{align}
According to \cite[Theorem 3.1]{WZ19}, $\bar{L}_t^A\in \textrm{MDP}(I)$ for $I(y)={y^2}/({8 \bar{V}(A)})$. Since $A$ is Lipschitz continuous, by (\ref{W2SHS}), we can find some small $\delta> 0$ such that (\ref{EeN}) holds. Therefore, the proof is finished by Theorem \ref{ThEeN}.

\paragraph{Acknowledgement.} The authors would like to thank Professor Feng-Yu Wang for supervision.

\beg{thebibliography}{99}

\bibitem{B88} P. A. Baldi, \emph{Large deviations and stochastic homogenisation}, Ann. Mat. Pura Appl. 151(1988), 161--177.

\bibitem{BM78} A. A. Borovkov, A. A. Mogulskii, \emph{Probabilities of large deviations in topological vector space I}, Siberian Math. J. 19(1978), 697--709.

\bibitem{BM80} A. A. Borovkov, A. A. Mogulskii, \emph{Probabilities of large deviations in topological vector space II}, Siberian Math. J. 21(1980), 12--26.

\bibitem{BWY20} J. Bao, F.-Y. Wang, C. Yuan, \emph{Limit theorems for additive functionals of path-depedent SDEs}, Discrete Contin. Dyn. Syst. 40(2020), 5173--5188.


\bibitem{C91} X. Chen, \emph{The moderate deviations of independent random vectors in a Banach space}, Chinese J. Appl. Probab. Statist. 7(1991), 24--32.

\bibitem{CDR17} P. Cattiaux, P. Dai Pra, S. Roelly, \emph{A constructive approach to a class of ergodic HJB equatons with unbounded and nonsmooth cost}, SIAM J. Control Optim. 47(2008), 2598--2615.

\bibitem{DV75} M. D. Donsker, S. R. S. Varadhan, \emph{Asymptotic evaluation of certain Markov process expectations for large time, I-IV}, Comm. Pure Appl. Math. 28(1975), 1--47, 279--301; 29(1976), 389--461; 36(1983), 183--212.

\bibitem{DZ98} A. Dembo, O. Zeitouni, \emph{Large Deviations Techniques and Applications}, Second Edition, Springer, New York. 1998.

\bibitem{Gao17} F. Gao, \emph{Long time asymptotics of unbounded additive functionals of Markov processes}, Electron. J. Probab. 22(2017), 1--21.

\bibitem{HRW}  X. Huang, P. Ren, F.-Y. Wang, \emph{Distribution Dependent Stochastic Differential Equation}, arXiv:2012.13656.

\bibitem{IN64} K. It\^{o}, M. Nisio, \emph{On stationary solutions of a stochastic differential equation}, J. Math.Kyoto Univ. 4(1964), 1--75.

\bibitem{KM03} I. Kontoyiannis, S. P. Meyn, \emph{Spectral theory and limit theorems for geometrically ergodic Markov processes}, Ann. Appl. Probab. 13(2003), 304--362.

\bibitem{RW20} P. Ren, F.-Y. Wang, \emph{Donsker-Varadhan Large Deviations for Path-Distribution Dependent SPDEs}, arXiv:2002.08652.

\bibitem{RWW06} M. R\"{o}ckner, F.-Y. Wang, L. Wu, \emph{Large deviations for stochastic generalized porous media equations}, Stoch. Proc. Appl. 116(2006), 1677--1689.

\bibitem{SV79} D. W. Stroock, S. R. S. Varadhan, \emph{Multidimensional Diffusion Processes}, Springer, New York. 1979.

\bibitem{Wfy11} F.-Y. Wang, \emph{Harnack inequality for SDE with multiplicative noise and extension to Neumann semigroup on nonconvex mainfolds}, Ann. Probab. 39(2011), 1449--1467.

\bibitem{Wfy17} F.-Y. Wang, \emph{Hypercontractivity and applications for stochastic Hamiltonian systems}, J. Funct. Anal. 272(2017), 5360--5383.

\bibitem{Wang18} F.-Y. Wang, \emph{Distribution dependent SDEs for Landau type equations.} Stoch. Proc. Appl. 128(2018), 595--621.

\bibitem{WZ19} F.-Y. Wang, Y. Zhang, \emph{Application of Harnack inequality to long time asymptotics of Markov processes(in Chinese)}, Sci. Sin. Math. 49(2019), 505--516.

\bibitem{Wu95} L. Wu, \emph{Moderate deviations of dependent random variables related to CLT}, Ann. Probab. 23(1995), 420--445.

\bibitem{Wu00} L. Wu, \emph{Uniformly integrable operators and large deviations for Markov processes}, J. Funct. Anal. 172(2000), 301--376.

\end{thebibliography}

\end{document}